\documentclass[11pt]{amsart}

\usepackage[T1]{fontenc}
\usepackage[utf8]{inputenc}
\usepackage{lmodern}
\usepackage{amsmath,amssymb,amsthm,mathtools}
\usepackage{booktabs}
\usepackage{enumitem}
\usepackage[expansion=false]{microtype}
\usepackage[margin=1.08in]{geometry}
\usepackage[hidelinks]{hyperref}
\hypersetup{
  pdftitle={Gross vectors modulo 2 and elliptic curves of prime conductor},
  pdfauthor={Matija Kazalicki; Sini\v sa Slijep\v cevi\'c},
  pdfsubject={Gross vectors, supersingular elliptic curves, and modular degrees},
  pdfkeywords={supersingular elliptic curves, Brandt modules, Gross vectors, binary quadratic forms, modular degree, Watkins' conjecture}
}

\newtheorem{theorem}{Theorem}[section]
\newtheorem{proposition}[theorem]{Proposition}
\newtheorem{lemma}[theorem]{Lemma}
\newtheorem{corollary}[theorem]{Corollary}
\theoremstyle{definition}

\newtheorem*{definition*}{Definition}

\theoremstyle{remark}
\newtheorem{remark}[theorem]{Remark}

\newcommand{\Q}{\mathbb{Q}}
\newcommand{\Z}{\mathbb{Z}}
\newcommand{\F}{\mathbb{F}}
\newcommand{\OO}{\mathcal{O}}
\newcommand{\Pic}{\operatorname{Pic}}
\newcommand{\End}{\operatorname{End}}
\newcommand{\Gal}{\operatorname{Gal}}
\newcommand{\tr}{\operatorname{tr}}
\newcommand{\Nr}{\operatorname{Nr}}
\newcommand{\Span}{\operatorname{span}}
\newcommand{\leg}[2]{\left(\frac{#1}{#2}\right)}
\newcommand{\crow}{\mathbf{c}}
\newcommand{\Rp}{\mathcal{R}_p}

\title[Gross vectors modulo $2$]{Gross vectors modulo $2$ and elliptic curves of prime conductor}
\author{Matija Kazalicki}
\address{Department of Mathematics, Faculty of Science, University of Zagreb\\
Bijeni\v{c}ka cesta 30, 10000 Zagreb, Croatia}
\email{matija.kazalicki@math.hr}

\author{Sini\v sa Slijep\v cevi\'c}
\address{Department of Mathematics, Faculty of Science, University of Zagreb\\
Bijeni\v{c}ka cesta 30, 10000 Zagreb, Croatia}
\email{slijepce@math.hr}

\date{}

\subjclass[2020]{Primary 11G05, 11F67; Secondary 11E16, 11R37, 14H52}
\keywords{supersingular elliptic curves, Brandt modules, Gross vectors, binary quadratic forms, modular degree, Watkins' conjecture}

\begin{document}

\begin{abstract}
Let $p>3$ be a prime, and let $S_p$ denote the geometric isomorphism
classes of supersingular elliptic curves in characteristic $p$ whose
$j$-invariants lie in $\F_p$. For each negative fundamental discriminant
$-D$ for which $p$ is inert in $\Q(\sqrt{-D})$, let $m_i(D)$,
$i\in S_p$, be the integral coefficients of the corresponding Gross
vector. We prove that the vectors
\[
  \bigl(m_i(D)\bmod2\bigr)_{i\in S_p}
\]
span $\F_2^{S_p}$. The key step reduces the parity of the representation
numbers of Gross's ternary lattices to representation by rank-two
sublattices perpendicular to Frobenius. Using Ibukiyama's explicit
maximal orders, these binary forms are identified with those occurring
in the Xiao--Zhou--Deng--Qu parametrization of supersingular elliptic
curves over $\F_p$. Class field theory and Chebotarev's theorem then
allow the individual supersingular coordinates to be isolated.

As a consequence, if $E/\Q$ has prime conductor $p$ and positive
Mordell--Weil rank, then every coefficient of its Brandt eigenvector
indexed by $S_p$ is even, proving a conjecture of Kazalicki and Kohen.
Thus an odd coefficient at a rational supersingular class is an algebraic
certificate of rank $0$. Combining this parity theorem with formulas of
Mestre and Gross--Kudla, we also prove that the modular degree of every
positive-rank elliptic curve of prime conductor and root number $+1$ is
divisible by $4$. Consequently, Watkins' conjecture holds for all such
curves of rank $2$.
\end{abstract}

\maketitle
\enlargethispage{8pt}

\section{Introduction}

Let $p>3$ be a prime, and let $S$ denote the set of isomorphism classes of supersingular elliptic curves over $\overline{\F}_p$. The Brandt module $X=\bigoplus_{i\in S}\Z e_i$ carries the usual Hecke operators. Let $E/\Q$ be an elliptic curve of conductor $p$, and let $f_E=\sum_{n\ge1}a_n(E)q^n$ be its normalized newform. By the definite Jacquet--Langlands correspondence, $f_E$ determines a one-dimensional simultaneous Hecke eigenspace in $X\otimes\mathbb C$ on which, for every prime $\ell\ne p$, the Hecke operator $t_\ell$ acts by multiplication by $a_\ell(E)$. This eigenspace contains a primitive vector with integer coefficients, unique up to sign, which we write as
\[
  v_E=\sum_{i\in S}c_i e_i;
\]
see, for example, \cite[Section~5]{Gross1987}.  We write
\[
  S_p=\{i\in S:j(E_i)\in\F_p\}
\]
for the supersingular classes fixed by Frobenius. We prove that positive Mordell--Weil rank forces $c_i$ to be even for every $i\in S_p$. Equivalently, a single odd coefficient at a rational supersingular class certifies that $\operatorname{rank}E(\Q)=0$.

For $i\in S$, let $R_i=\End_{\overline{\F}_p}(E_i)$ and $w_i=\#R_i^\times/2$. By Deuring's correspondence, $R_i$ is a maximal order in the quaternion algebra $B_{p,\infty}$ ramified at $p$ and $\infty$ \cite{Deuring1941}. We use the Brandt pairing $\langle e_i,e_j\rangle=w_i\delta_{ij}$. Let $-D$ be a negative fundamental discriminant satisfying $\leg{-D}{p}=-1$. If $h_i(-D)$ denotes the number of optimal embeddings $\OO_{-D}\hookrightarrow R_i$, modulo $R_i^\times$-conjugacy, and $u(-D)=\#\OO_{-D}^{\times}$, Gross \cite[Sections~12--13]{Gross1987} defines the CM vector
\[
  b_D=\sum_{i\in S}\frac{h_i(-D)}{u(-D)}e_i.
\]
We use the integral coefficients
\[
  m_i(D)=\langle e_i,b_D\rangle=\frac{w_i h_i(-D)}{u(-D)}.
\]

\begin{definition*}
A negative fundamental discriminant $-D$ is \emph{admissible} for $p$ if $\leg{-D}{p}=-1$, equivalently if $p$ is inert in $\Q(\sqrt{-D})$. For such $-D$, the \emph{Gross parity row} is
\[
  \crow(D)=\bigl(m_i(D)\bmod2\bigr)_{i\in S_p}\in\F_2^{S_p},
\]
and the \emph{Gross-row space} is
\[
  \Rp=\Span_{\F_2}\{\crow(D):-D\text{ admissible}\}.
\]
\end{definition*}

Our main result is the following.

\begin{theorem}[Gross-row spanning]\label{thm:spanning}
For every prime $p>3$,
\[
  \Rp=\F_2^{S_p}.
\]
\end{theorem}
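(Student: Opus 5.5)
The plan follows the outline in the abstract. The starting point is Gross's ternary lattices. For $i\in S$, let $L_i=\{x\in\Z+2R_i:\tr x=0\}$ with the reduced norm. Gross shows that the number of $x\in L_i$ with $\Nr(x)=D$ equals $2h_i(-D)/u(-D)\cdot(\text{unit factor})$, and more precisely that $m_i(D)=\tfrac{1}{2}\#\{x\in L_i:\Nr x=D\}\cdot w_i^{-1}\cdot w_i$ up to the standard normalization. I would first fix the exact identity $\#\{x\in L_i:\Nr x=D\}=2\,w_i h_i(-D)/u(-D)\cdot(\ldots)$, so that $m_i(D)\equiv \tfrac12 r_{L_i}(D)\pmod 2$.

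\textbf{Reduction to a binary lattice.} For $i\in S_p$, the Frobenius $\pi_i$ of a model over $\F_p$ satisfies $\pi_i^2=-p$ and normalizes $R_i$. Conjugation by $\pi_i$ is an involution $\sigma$ of $L_i$ preserving the norm, and its fixed vectors are proportional to $\pi_i$. Pair each solution $x$ with $\sigma(x)$ (and also with $-x$). I would use the group $\{\pm1\}\times\langle\sigma\rangle$, of order $4$. Orbits of size $4$ contribute $0$ to $\tfrac12 r\bmod2$. The remaining orbits come from $x$ with $\sigma(x)=\pm x$: either $x\in\Z\pi_i\cap L_i$, which is impossible since $p\nmid D$ forces $\Nr x\ne D$, or $x\in L_i^{-}:=\{x:\pi_i x\pi_i^{-1}=-x\}$, the rank-two sublattice perpendicular to $\pi_i$. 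This gives
\[
m_i(D)\equiv \tfrac12\, r_{L_i^-}(D)\pmod 2 ,
\]
and since $L_i^-$ is binary, this is a representation number of a binary quadratic form $Q_i$ of discriminant related to $-p$ or $-4p$.

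\textbf{Identifying the binary forms.} Next I would use Ibukiyama's explicit maximal orders $\mathcal{O}(q,r)$ and $\mathcal{O}'(q,r')$ containing $\Z[\sqrt{-p}]$ or $\Z[(1+\sqrt{-p})/2]$ to compute $L_i^-$ explicitly. The aim is to show that the forms $Q_i$, for $i\in S_p$, are exactly the forms appearing in the Xiao--Zhou--Deng--Qu parametrization, namely primitive forms of discriminant $-p$ or $-4p$, or their scalings, one class per $\F_p$-rational supersingular curve up to the $\pm$ and twist identifications. In particular, distinct $i$ should give inequivalent forms or pairs $\{Q,Q^{-1}\}$ in the class groups of $\Z[\sqrt{-p}]$ and $\Z[(1+\sqrt{-p})/2]$. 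I expect this bookkeeping to be the main obstacle. It involves the cases $p\bmod 8$, the curves with two $\F_p$-structures such as $j=1728$, and the mismatch $w_i>1$.

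\textbf{Isolating coordinates.} Take $D=\ell$ or $4\ell$ with $\ell$ a prime for which $-D$ is admissible. Then $\tfrac12 r_{Q}(D)$ is nonzero exactly when $\ell$ is represented by $Q$, and the number of such representations is governed by whether the prime ideals above $\ell$ lie in the class $[Q]$. By Chebotarev in the ring class field of the relevant order, composed with $\Q(\sqrt{-1},\sqrt{2})$ to control admissibility and the congruence of $D$, I would choose $\ell$ whose Frobenius is a prescribed class $C$, with $C\neq C^{-1}$ where possible. For such $\ell$, $\crow(D)$ is supported on the $i$ whose form is in $\{C,C^{-1}\}$, and the parity count is odd on those coordinates. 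Classes of order $\le2$, the genus and ambiguous forms, need separate handling. Possibly this requires combining two Chebotarev choices, for instance one $\ell$ representing both the $-p$ and the $-4p$ forms, and then taking differences in $\F_2$. These choices produce all unit vectors $e_i$, so $\Rp=\F_2^{S_p}$.
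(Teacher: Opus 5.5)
Your architecture matches the paper's: the exact normalization $N_i(D)=2m_i(D)$, four-element orbits under negation and conjugation by $\pi_i$, Ibukiyama's models plus Xiao--Zhou--Deng--Qu (injectivity of $i\mapsto\{C_i,C_i^{-1}\}$ and $C_i\ne C_i^{-1}$), then Chebotarev. But two points you defer are not bookkeeping, and the remedies you sketch for them would fail.

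\emph{First, the coordinate $j=1728$}, present when $p\equiv3\pmod4$. Its perpendicular lattice is $[4,0,p]$ for the model $y^2=x^3+x$, and $4[1,1,\frac{p+1}{4}]$ for $y^2=x^3-x$. Both classes are ambiguous. In an ambiguous class the ideals of a non-square norm prime to the discriminant come in conjugate pairs, so $r_C(n)/2$ is even. A row with $D=\ell$ or $D=4\ell$ reduces at $1728$ either to $0$ or to $r_C(\ell)/2$ with $C=[4,0,p]$, $[1,0,p]$ or $[1,1,\frac{p+1}{4}]$. So its entry there is always even, and no $\F_2$-combination of such rows, however the Chebotarev choices are combined, gives $e_{1728}$. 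You need a row outside this family. Since $\OO_{-4}$ embeds only into $R_{1728}$ and $m_{1728}(4)=2\cdot2/4=1$, one has $\crow(4)=e_{1728}$. Similarly $\crow(3)=e_0$ when $p\equiv2\pmod3$; in any case the Frobenius-marked identification with Ibukiyama's models is only available for $j\ne0,1728$. All later rows are then used modulo the span $W_{\mathrm{exc}}$ of these vectors.

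\emph{Second, admissibility on the $\Z[\pi_i]$ branch.} There $L_i^\perp$ is the primitive form $[q,4r,4(r^2+p)/q]$ of discriminant $-16p$. So one works in $\Pic(\Z[2\sqrt{-p}])$, not $\Pic(\Z[\sqrt{-p}])$, and its ring class field already contains the genus field $\Q(\sqrt{-p},i)$. Hence $\ell\bmod4$ and $\leg{-\ell}{p}$ are determined by the genus of the class of the primes above $\ell$. They cannot be imposed independently through a compositum with $\Q(\sqrt{-1},\sqrt2)$. What makes the argument work is that every $C_i$ on this branch lies in the nonprincipal genus; this is visible from $Q_{q,r}(1,0)=q\equiv3\pmod4$. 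Consequently any prime it represents automatically satisfies $\ell\equiv3\pmod4$ and $\leg{-\ell}{p}=-1$. In the principal genus, $-\ell$ would not even be a discriminant.

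Your compositum idea is right only on the discriminant $-p$ branch. There $K=\Q(\sqrt{-p})$ has odd discriminant, so $K(i)/K$ is ramified at $2$ and disjoint from the Hilbert class field, and one takes $D=4q$ with $q\equiv1\pmod4$. Those rows may have entries on the $-16p$ branch. The rows $D=\ell$, by contrast, vanish on the $-p$ branch because its norm form is $4Q'_{q,r'}$. So the coordinates must be produced in order: exceptional, then $-16p$, then $-p$.
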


Suppose now that $E(\Q)$ has positive rank. By Kolyvagin--Logachev \cite[Theorem~0.3]{KolyvaginLogachev1990}, $L(E,1)=0$, and Gross's central-value formula \cite[Proposition~13.5]{Gross1987} gives $\langle v_E,b_D\rangle=0$ for every admissible $-D$; see Proposition~\ref{prop:gross-orthogonality}. If $\bar i$ denotes the Frobenius conjugate of $i$, then $m_{\bar i}(D)=m_i(D)$, while Lemma~\ref{lem:frob-sign} gives $c_{\bar i}=\varepsilon(E)c_i$, where $\varepsilon(E)$ is the root number. In particular, $c_{\bar i}\equiv c_i\pmod2$. The contributions of the nonrational supersingular classes therefore cancel in Frobenius pairs modulo $2$, giving
\[
  \sum_{i\in S_p}c_i m_i(D)\equiv0\pmod2
\]
for every admissible $-D$. Hence $(c_i\bmod2)_{i\in S_p}\in\Rp^\perp$, and Theorem~\ref{thm:spanning} gives the following.

\begin{corollary}[Kazalicki--Kohen parity]\label{cor:parity}
If $E/\Q$ has prime conductor $p>3$ and positive Mordell--Weil rank, then
\[
  c_i\equiv0\pmod2\qquad(i\in S_p).
\]
\end{corollary}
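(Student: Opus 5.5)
The corollary follows from Theorem~\ref{thm:spanning} via the orthogonality argument sketched just before its statement. The plan has three steps.

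\textbf{Step 1: orthogonality.} First I will establish that $\langle v_E,b_D\rangle=0$ for every admissible $-D$. Positive rank gives $L(E,1)=0$. For a rank-$0$ curve with $L(E,1)\neq 0$ the rank would be zero by Kolyvagin--Logachev, so positive rank forces $L(E,1)=0$ (via the contrapositive of Kolyvagin--Logachev's theorem). Gross's formula \cite[Proposition~13.5]{Gross1987} expresses $L(E,1)L(E\otimes\chi_{-D},1)$ as a nonzero constant times $\langle v_E,b_D\rangle^2$. Hence $\langle v_E,b_D\rangle=0$; this is Proposition~\ref{prop:gross-orthogonality}.

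Expanding with the Brandt pairing gives
\[
\langle v_E,b_D\rangle=\sum_{i\in S}c_i\,\frac{w_ih_i(-D)}{u(-D)}=\sum_{i\in S}c_i m_i(D),
\]
so $\sum_{i\in S}c_im_i(D)=0$ in $\Z$.

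\textbf{Step 2: reduction to $S_p$.} Next I will split $S$ into $S_p$ and pairs $\{i,\bar i\}$ with $i\ne\bar i$.

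Frobenius $\sigma$ induces a bijection $E_i\mapsto E_i^{(p)}$ carrying $R_i$ isomorphically onto $R_{\bar i}$. It therefore also carries optimal embeddings of $\OO_{-D}$ bijectively, compatibly with conjugacy. This gives $h_{\bar i}(-D)=h_i(-D)$ and $w_{\bar i}=w_i$, hence $m_{\bar i}(D)=m_i(D)$.

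By Lemma~\ref{lem:frob-sign}, $c_{\bar i}=\varepsilon(E)c_i$, so each pair contributes
\[
(1+\varepsilon(E))\,c_im_i(D)\equiv 0\pmod 2,
\]
since $\varepsilon(E)=\pm1$. Therefore $\sum_{i\in S_p}c_im_i(D)\equiv0\pmod2$ for every admissible $-D$.

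\textbf{Step 3: conclusion.} Reducing modulo $2$, Step 2 says the vector $\bar c=(c_i\bmod 2)_{i\in S_p}$ satisfies $\bar c\cdot\crow(D)=0$ under the standard dot product on $\F_2^{S_p}$. This holds for all admissible $-D$, so $\bar c\in\Rp^\perp$.

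By Theorem~\ref{thm:spanning}, $\Rp=\F_2^{S_p}$. Since the standard pairing on $\F_2^{S_p}$ is nondegenerate, $\Rp^\perp=0$. Hence $\bar c=0$, i.e.\ $c_i$ is even for all $i\in S_p$.

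All the difficulty sits in Theorem~\ref{thm:spanning}, which is assumed here. Within the corollary itself, the only points needing care are that the normalization of $m_i(D)$ makes it an integer, and that the Frobenius pairing statement $m_{\bar i}=m_i$ is correct. Both are routine consequences of Deuring's correspondence.
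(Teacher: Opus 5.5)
Your proposal is correct and follows the paper's own argument: Gross orthogonality (Proposition~\ref{prop:gross-orthogonality}), cancellation of Frobenius pairs modulo $2$ (Corollary~\ref{cor:rational-relation}), and then $\Rp^\perp=0$ from Theorem~\ref{thm:spanning}. One small difference: the paper does not get integrality of $m_i(D)$ directly from Deuring. It obtains it from $N_i(D)=2m_i(D)$ in Lemma~\ref{lem:normalization}, together with the fact that $x\mapsto -x$ acts without fixed points on the norm-$D$ vectors.
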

Consequently, an odd coefficient $c_i$ for some $i\in S_p$ forces $\operatorname{rank}E(\Q)=0$. This is a one-sided criterion: the converse is false. For example, the rank-zero curve
\[
  E:\quad y^2+y=x^3+x^2-1873x-31833
\]
of conductor $37$ has positive discriminant and no rational point of order
$2$, so \cite[Theorem~1.4]{KazalickiKohen2017} shows that
$c_i\equiv0\pmod2$ for every $i\in S_{37}$.

This parity question arose from the study of divisor polynomials. Ono observed a connection between their zeros modulo $p$ and supersingular $j$-invariants \cite[p.~118]{Ono2004}. Kazalicki and Kohen made this connection precise for elliptic curves of prime conductor by relating the supersingular zeros of the divisor polynomial to the coefficients of the corresponding Brandt eigenvector \cite{KazalickiKohen2017}. When the root number is $-1$, they proved that $c_i=0$ for every $i\in S_p$. For root number $+1$, they conjectured that positive rank forces $c_i\equiv0\pmod2$ for every $i\in S_p$, and proved this when the minimal discriminant is positive and $E$ has no rational point of order $2$. Corollary~\ref{cor:parity} removes both restrictions.

The connection with Watkins' conjecture was developed in \cite{KazalickiKohen2018,KazalickiKohenCorrigendum2019}. Watkins conjectured that, for an elliptic curve $E/\Q$,
\[
  2^{\operatorname{rank}E(\Q)}\mid m_E,
\]
where $m_E$ denotes the modular degree \cite[Conjecture~4.1]{Watkins2002}. Kazalicki and Kohen showed how parity on $S_p$, together with Mestre's
norm formula \cite[Section~3, formula~(5), Theorem~3.2]{Mestre1986} and
the Gross--Kudla cubic identity
\cite[Corollary~11.5]{GrossKudla1992}, yields divisibility by $4$ of the
modular degree. Using their previously established parity theorem, this proved Watkins'
conjecture for rank-two elliptic curves of prime conductor and positive
discriminant. Corollary~\ref{cor:parity} removes the discriminant restriction and gives,
for every positive-rank prime-conductor curve with root number $+1$,
the divisibility statement proved below in
Theorem~\ref{thm:modular-degree}.

\begin{corollary}
Let $E/\Q$ be an elliptic curve of prime conductor $p>3$, positive Mordell--Weil rank, and root number $+1$. Then
\[
  4\mid m_E.
\]
In particular, Watkins' conjecture holds for such curves of rank $2$.
\end{corollary}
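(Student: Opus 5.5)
The corollary follows by feeding Corollary~\ref{cor:parity} into the Kazalicki--Kohen argument of \cite{KazalickiKohen2018,KazalickiKohenCorrigendum2019}. That argument expresses $m_E$ through Brandt data, and its only arithmetic input beyond the formulas was parity on $S_p$. Normalize $v_E=\sum_i c_ie_i$ as primitive. Mestre's formula \cite[Section~3, (5), Theorem~3.2]{Mestre1986} gives
\[
  m_E\cdot(\text{correction from the Manin constant and }c_4\text{-type factor})=\langle v_E,v_E\rangle=\sum_{i\in S}w_ic_i^2 ,
\]
up to an explicit factor which for prime conductor $p>3$ is odd or known. First I will check precisely, from \cite{Mestre1986}, that $m_E$ equals $\sum_i w_i c_i^2$ up to a unit at $2$, or up to a factor whose $2$-adic valuation is controlled. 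I will use that the Manin constant is $1$ for prime conductor (optimal curve) and that $w_i\in\{1,2,3\}$.

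Next, split the sum. The non-rational classes occur in Frobenius pairs $\{i,\bar i\}$ with $w_i=w_{\bar i}$. Since the root number is $+1$, Lemma~\ref{lem:frob-sign} gives $c_{\bar i}=c_i$, so each pair contributes $2w_ic_i^2$. The classes in $S_p$ contribute $w_ic_i^2$ with $c_i$ even by Corollary~\ref{cor:parity}, hence a multiple of $4$. So the task reduces to showing
\[
  \sum_{\{i,\bar i\},\,i\notin S_p} w_ic_i^2\equiv 0\pmod 2,
\]
which is the same as showing that $\sum_{i\notin S_p,\text{ up to pairing}} w_ic_i\equiv0\pmod2$.

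The Gross--Kudla identity \cite[Corollary~11.5]{GrossKudla1992} handles this step. It is the analogue in this setting of the degree/Eisenstein orthogonality: $v_E$ is cuspidal, so $\sum_i c_i=\langle v_E,\sum_i e_i/w_i\rangle\cdot(\ldots)$ vanishes, giving $\sum_{i\in S}c_i=0$. Modulo $2$ the pairs contribute $2c_i$, which vanishes, so this alone gives only $\sum_{S_p}c_i\equiv0$, and that is already known. The cubic identity $\sum_i w_i^2c_i^3=$ (a multiple of $L(E,1)$-type central value, zero in positive rank) is therefore the real input. It is the step I expect to be the main obstacle. I plan to follow Kazalicki--Kohen: reduce the cubic sum modulo $4$ or $8$, use $c_i$ even on $S_p$ to kill those terms modulo $8$, and use the pairing to write the rest as $2\sum_{\text{pairs}}w_i^2c_i^3$. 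Since $c_i^3\equiv c_i$ and $w_i^2\equiv w_i$ modulo $2$, vanishing of the cubic identity then yields $\sum_{\text{pairs}}w_ic_i\equiv0\pmod2$. That is exactly the required congruence, because $c_i^2\equiv c_i$.

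Finally, I combine these: $\langle v_E,v_E\rangle\equiv0\pmod4$, so $4\mid m_E$. Rank $2$ gives positive rank, and by parity of the rank the root number is $+1$, so $2^2\mid m_E$ is Watkins' conjecture in that case. The delicate bookkeeping lies in the exact powers of $2$ in Mestre's normalization and in the $w_i$ for $j=1728$ (where $w_i=2$). Those terms must be checked separately. I will check whether $j=1728$ lies in $S_p$, which it does when it is supersingular, so its term $2c_i^2$ is divisible by $8$.
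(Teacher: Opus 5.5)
Your $2$-adic computation is correct, and it is the Kazalicki--Kohen mechanism the paper invokes. With $c_i$ even on $S_p$ and $c_{\bar i}=c_i$, the $S_p$-terms of $\sum_i w_i^2c_i^3=0$ vanish modulo $8$. Hence $\sum_{\text{pairs}}w_ic_i$ is even, and $\langle v_E,v_E\rangle\equiv0\pmod4$.

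The gap is the step from $\langle v_E,v_E\rangle$ to the modular degree, which you postpone and misdescribe. Mestre's formula concerns the \emph{optimal} curve $E_0$ of the isogeny class. Its correction factor is the order of the geometric component group at $p$:
$\langle v_E,v_E\rangle=m_{E_0}\cdot\operatorname{ord}_p(\Delta_{E_0})$.
This factor involves neither the Manin constant nor $c_4$, and it is not a $2$-adic unit in general. For $E_0=X_0(17)$ one has $m_{E_0}=1$, $S_{17}=\{0,8\}$ and $v_E=e_0-e_8$, so $\langle v_E,v_E\rangle=3+1=4$ and $\operatorname{ord}_{17}(\Delta_{E_0})=4$. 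Optimal Setzer--Neumann curves have $\Delta=-p^2$.

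So the check you plan, ``$m_E$ equals $\sum_i w_ic_i^2$ up to a unit at $2$'', is false as stated. The implication $4\mid\langle v_E,v_E\rangle\Rightarrow 4\mid m_{E_0}$ needs an extra input showing that $\operatorname{ord}_p(\Delta_{E_0})$ is odd when $E$ has positive rank. One route is Mestre--Oesterl\'e's theorem that $\Delta_{E_0}=\pm p$ except for $p\in\{11,17,19,37\}$ and the Setzer--Neumann curves, combined with the fact that those exceptional optimal curves have rank $0$. Alternatively, do what the paper does and apply the proof of \cite[Theorem~1.3]{KazalickiKohen2018} to $E_0$ as a black box.

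Second, you never pass from the optimal curve to the given one. The corollary concerns an arbitrary $E$ of conductor $p$, and Mestre's formula says nothing directly about $m_E$ when $E\ne E_0$. The paper closes this in two steps. First, $E$ and $E_0$ share the Brandt eigenvector, rank and root number. Second, any parametrization $X_0(p)\to E$ with $\infty\mapsto0$ factors as $\varphi\circ\pi_0$, where $\varphi\colon E_0\to E$ is an isogeny \cite[Remark~1.3]{CalegariEmerton2009}; hence $m_{E_0}\mid m_E$.

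Finally, drop the sentence deriving root number $+1$ from rank $2$ ``by parity of the rank.'' That is the parity conjecture, which is not known unconditionally. It is also unnecessary, because the curves of rank $2$ in the statement already carry the root-number-$+1$ hypothesis.
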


We briefly describe the proof of Theorem~\ref{thm:spanning}. Gross's
coefficient $m_i(D)$ is half the number of norm-$D$ vectors in the ternary
Gross lattice
\[
  L_i=\{x\in\mathbb Z+2R_i:\operatorname{tr}(x)=0\}.
\]
For $i\in S_p$, choose an $\F_p$-model of $E_i$ with Frobenius $\pi_i$ and
consider the rank-two lattice
\[
  L_i^\perp=L_i\cap(\Q\pi_i)^\perp.
\]
Conjugation by $\pi_i$, together with negation, groups the norm-$D$ vectors
outside $L_i^\perp$ into four-element orbits. It follows that, modulo $2$,
$m_i(D)$ is determined by representation by $L_i^\perp$. Thus Frobenius
reduces the parity problem from a ternary to a binary quadratic form.

To identify these binary forms, we use Ibukiyama's explicit maximal-order
models in $B_{p,\infty}$, keeping track of the quadratic order generated by
Frobenius. The resulting marked-order identification sends $\pi_i$ to the
distinguished quadratic generator in the corresponding model, and hence
identifies $L_i^\perp$ with its perpendicular lattice. A direct norm
calculation then gives either a primitive positive-definite binary quadratic
form of discriminant $-16p$, or four times a primitive form of discriminant
$-p$. We refer to these as the discriminant $-16p$ and discriminant $-p$
branches.

For each nonexceptional $i$, let $C_i$ denote the proper class of the
primitive form obtained in this way. Thus the natural orientation-free
object attached to $i$ is the inverse orbit
\[
  \{C_i,C_i^{-1}\},
\]
that is, the orbit of $C_i$ under the involution $C\mapsto C^{-1}$.
Comparing the formulas with the parametrization of Xiao--Zhou--Deng--Qu
\cite[Theorems~3.3, 3.5 and~3.7]{XZDQ2025} shows that, within either
branch, distinct geometric supersingular classes determine distinct
inverse orbits:
\[
  i\ne j
  \qquad\Longrightarrow\qquad
  \{C_i,C_i^{-1}\}\ne\{C_j,C_j^{-1}\}.
\]
In the discriminant $-16p$ branch, the classes $C_i$ also
lie in the nonprincipal genus.

The spanning theorem is then proved by applying Chebotarev's theorem in
the corresponding ring class fields. The exceptional coordinates $j=0$
and $j=1728$, when supersingular, are handled separately by explicit Gross
rows. For a nonexceptional coordinate $i$ in the discriminant $-16p$
branch, we choose a rational prime $\ell$ that splits in the relevant
quadratic order so that the two ideals above $\ell$ represent the classes
$C_i$ and $C_i^{-1}$. Since these are the only ideals of norm $\ell$, the
inverse orbit attached to $i$ has odd representation parity, while every
other nonexceptional inverse orbit has even parity. The genus conditions
ensure that $-\ell$ is admissible, and the relation between Gross
coefficients and binary representation numbers then shows that, modulo the
exceptional coordinates, the corresponding Gross row isolates $i$. Thus
all coordinates in the discriminant $-16p$ branch lie in $\mathcal R_p$.

When $p\equiv3\pmod4$, we next treat the discriminant $-p$ branch. For a
coordinate $i$ in this branch, Chebotarev is used with an additional
congruence condition to choose a prime $q\equiv1\pmod4$ whose two ideals
above $q$ represent $C_i$ and $C_i^{-1}$. The discriminant $-4q$ is then
admissible, and the corresponding Gross row isolates $i$ on the
discriminant $-p$ branch. It may also have support on the
discriminant $-16p$ branch, but those coordinates have already been shown
to lie in $\mathcal R_p$. Hence $e_i\in\mathcal R_p$ for every remaining
coordinate, proving
\[
  \mathcal R_p=\F_2^{S_p}.
\]

Products of two suitably chosen split primes give an additional support
phenomenon: in the discriminant $-16p$ branch, a Gross row associated with
such a product can realize any prescribed pair of nonexceptional
coordinates. We record this after the proof of
Theorem~\ref{thm:spanning}.

Section~\ref{sec:gross} fixes the Gross normalization and reduces the
elliptic-curve parity statement to Theorem~\ref{thm:spanning}.
Section~\ref{sec:binary} relates Gross coefficients modulo $2$ to binary
quadratic forms using Frobenius and identifies the resulting form classes.
Section~\ref{sec:classfield} develops the class-field-theoretic tools used
to control their representation parities, and
Section~\ref{sec:spanning-proof} proves the spanning theorem and records
the additional support relations arising from products of two primes.
Finally, Section~\ref{sec:watkins} gives the modular-degree application,
and Section~\ref{sec:example83} illustrates the argument for $p=83$.

\section{Gross relations and the \texorpdfstring{mod-$2$}{mod-2} reduction}\label{sec:gross}

We retain the notation of the introduction. Most of the ingredients in this
section are standard, and we record them in the precise normalization needed
below, where factors of $2$ are essential. The Frobenius symmetry of Brandt
eigenvectors is recalled from \cite[Propositions~18--19]{KazalickiKohen2017},
the interpretation of Gross coefficients as ternary representation numbers
comes from \cite[Section~12, in particular Proposition~12.9]{Gross1987},
and the exact orthogonality relation follows from Gross's central-value
formula. We then extract two elementary consequences needed for the proof:
modulo $2$ the Gross relations may be restricted to the Frobenius-fixed
supersingular classes, and the exceptional coordinates $j=0,1728$ are
supplied directly by the discriminants $-3$ and $-4$.

\subsection{Frobenius on Brandt eigenvectors}

Write $\bar i$ for the Frobenius conjugate of $i\in S$.

\begin{lemma}[Frobenius sign]\label{lem:frob-sign}
Let $f\in S_2(\Gamma_0(p))$ be a newform with root number $\varepsilon(f)$, and write its Brandt eigenvector as $v_f=\sum_i c_i e_i$.  Then
\[
  c_{\bar i}=\varepsilon(f)c_i
\]
for every $i$.  In particular, $c_{\bar i}\equiv c_i\pmod2$; if $\varepsilon(f)=+1$, then $c_{\bar i}=c_i$.
\end{lemma}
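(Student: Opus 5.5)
The plan is to realize the Frobenius action on the supersingular set as the Atkin--Lehner involution $W_p$ acting on the Brandt module, and then read off the sign from the relation between $W_p$ and the root number. We proceed in three steps.

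First, recall that the $p$-th power Frobenius $E_i\mapsto E_i^{(p)}$ is an isogeny of degree $p$. In $B_{p,\infty}$ the prime $p$ is ramified, so every maximal order $R_i$ has a unique two-sided ideal $P_i$ of reduced norm $p$. Under Deuring's correspondence, Frobenius corresponds to multiplying the left $R_1$-ideal classes by this ideal, i.e.\ $I\mapsto IP_i$, where $P_i$ is the two-sided prime of the right order $R_i$ of $I$. The resulting involution of $X=\bigoplus\Z e_i$ is $e_i\mapsto e_{\bar i}$, and it is exactly the Hecke operator $t_p$ on the Brandt module. It is an isometry for the Brandt pairing, since $w_{\bar i}=w_i$ because $R_{\bar i}\cong R_i$ via Frobenius. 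It also commutes with all $t_\ell$ for $\ell\ne p$.

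Second, under the Jacquet--Langlands/Eichler correspondence between $X\otimes\mathbb C$ (modulo Eisenstein) and $S_2(\Gamma_0(p))$, the operator $t_p$ corresponds to $T_p=-W_p$ on newforms of level $p$. Thus $t_p$ acts on the $f$-eigenline by $a_p(f)$. The functional equation gives $\varepsilon(f)=-w_p(f)=a_p(f)$, where $w_p(f)$ is the $W_p$-eigenvalue. By multiplicity one for the full Hecke algebra away from $p$, the $f$-isotypic line in $X\otimes\mathbb C$ is one-dimensional. It is stable under the involution $e_i\mapsto e_{\bar i}$ because that involution commutes with the $t_\ell$. Hence $v_f$ is an eigenvector with eigenvalue $\pm1$, and this eigenvalue equals $a_p(f)=\varepsilon(f)$. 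Comparing coefficients gives $c_{\bar i}=\varepsilon(f)c_i$. This is the content of \cite[Propositions~18--19]{KazalickiKohen2017}, which we may simply cite, but the argument above is the self-contained route.

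The consequences follow immediately. Since $\varepsilon(f)=\pm1$, we get $c_{\bar i}\equiv c_i\pmod 2$, and $\varepsilon(f)=+1$ gives equality.

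The main obstacle is the sign bookkeeping: identifying Frobenius with $t_p$ rather than with $-t_p$, and matching $a_p=-w_p=\varepsilon$ in the paper's normalization. To fix the sign, I would check the Eisenstein vector $\sum_i e_i/w_i$. Frobenius fixes it, and $t_p$ must act on it by $1$, the Eisenstein eigenvalue at a ramified prime. This pins down $t_p=$ Frobenius with no sign.
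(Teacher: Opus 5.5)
Your argument is correct and is the same as the paper's: the degree-$p$ Brandt operator $t_p$ is the Frobenius permutation $e_i\mapsto e_{\bar i}$. It corresponds to $T_p=-W_p$ under Jacquet--Langlands, and $w_p(f)=-\varepsilon(f)$ gives $t_pv_f=\varepsilon(f)v_f$.

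Your multiplicity-one remark and Eisenstein check are harmless extras. Note only that $t_p$ equals the Frobenius permutation by definition, so the real sign input is the normalization $t_p\leftrightarrow -W_p$ (equivalently $a_p(f)=\varepsilon(f)$). Your Eisenstein computation is consistent with that normalization but does not replace it.
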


\begin{proof}
The degree-$p$ Brandt operator sends $e_i$ to $e_{\bar i}$.  Under Jacquet--Langlands it corresponds to $-W_p$.  Since the $W_p$-eigenvalue of a weight-two newform of prime level is $-\varepsilon(f)$, we have $t_pv_f=\varepsilon(f)v_f$.  Comparing coefficients gives the result.  This is the normalization used in \cite[Propositions~18--19]{KazalickiKohen2017}.
\end{proof}

\subsection{Gross coefficients as representation numbers}

For an admissible discriminant $-D$, let $b_D$ and $m_i(D)$ be as in the introduction.  We use throughout the full-unit convention $u(-D)=\#\OO_{-D}^{\times}$.  Define Gross's trace-zero lattice
\[
  L_i=\{x\in\Z+2R_i:\tr(x)=0\},
\]
and let $N_i(D)$ be the number of vectors $x\in L_i$ with $\Nr(x)=D$.

\begin{lemma}[Vector--embedding normalization]\label{lem:normalization}
For every negative fundamental discriminant $-D$,
\[
  N_i(D)=2m_i(D).
\]
In particular, $m_i(D)\in\Z$.
\end{lemma}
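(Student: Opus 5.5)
We set up a bijection between norm-$D$ vectors of $L_i$ and optimal embeddings $\OO_{-D}\hookrightarrow R_i$, counted with signs and units, and then track the factor $w_i/u(-D)$.

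\emph{Vectors to embeddings.} Let $x\in L_i$ with $\Nr(x)=D$ and $\tr(x)=0$, so $x^2=-D$. Write $x=a+2r$ with $a\in\Z$ and $r\in R_i$. Then $\tr(x)=0$ gives $2a+2\tr(r)=0$, hence $a=-\tr(r)$. It follows that
\[
  \frac{\epsilon+x}{2}=\frac{\epsilon-\tr(r)}{2}+r\in R_i
\]
whenever $\epsilon\equiv D\pmod 2$, since $\epsilon$ and $\tr(r)$ then have the same parity. So $x$ defines an embedding $\phi_x:\OO_{-D}\to R_i$ sending $\sqrt{-D}$ to $x$.

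\emph{The embedding is optimal.} Suppose $\phi_x(K)\cap R_i$ were strictly larger than $\phi_x(\OO_{-D})$. Then it would be an order containing $\OO_{-D}$ with the same fraction field, which is impossible because $\OO_{-D}$ is maximal ($-D$ fundamental).

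\emph{Embeddings to vectors.} Given an optimal embedding $\phi$, set $x=\phi(\sqrt{-D})$. Then $x$ is trace zero of norm $D$. If $D$ is odd, $x=2\phi\!\left(\tfrac{1+\sqrt{-D}}2\right)-1\in\Z+2R_i$. If $D\equiv 0\pmod 4$, then $\sqrt{-D}=2\sqrt{-D/4}$ and $x=2\phi(\sqrt{-D/4})\in 2R_i$. So $x\in L_i$, and we get a bijection
\[
  \{x\in L_i:\Nr(x)=D\}\;\longleftrightarrow\;\{\text{optimal embeddings }\OO_{-D}\hookrightarrow R_i\}.
\]

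\emph{Passing to conjugacy classes.} The group $R_i^\times/\{\pm1\}$, of order $w_i$, acts on the right-hand side by conjugation. The stabilizer of $\phi$ is $(\phi(K)^\times\cap R_i^\times)/\{\pm1\}=\OO_{-D}^\times/\{\pm1\}$, of order $u(-D)/2$; this uses that the centralizer of $\phi(K)$ in $B$ is $\phi(K)$. Each orbit therefore has $2w_i/u(-D)$ elements, and so
\[
  N_i(D)=h_i(-D)\cdot\frac{2w_i}{u(-D)}=2m_i(D).
\]

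\emph{Integrality.} Since $N_i(D)$ is even (via $x\mapsto -x$), we get $m_i(D)=N_i(D)/2\in\Z$. The same conclusion follows from the orbit count, because $(u(-D)/2)\mid w_i$ by Lagrange.

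The only delicate step is the stabilizer computation in the orbit count, where the unit normalizations enter. The case $D=3$ or $4$ with $R_i$ having extra units needs no special treatment, since the orbit--stabilizer argument is uniform.
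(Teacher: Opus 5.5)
Your argument is correct, but it differs from the paper's: you prove the identity, whereas the paper imports it. The paper quotes Gross's Proposition~12.9. From Gross's normalization $\frac12\sum_{x\in L_i}q^{\Nr(x)}=\frac12+\sum_{D>0}a_i(D)q^D$ it reads off $a_i(D)=N_i(D)/2$. It then takes $a_i(D)=\frac{w_i}{2}\,h_i(-D)/u_{\mathrm{Gross}}(-D)$ from Gross and converts $u_{\mathrm{Gross}}(-D)=\#\OO_{-D}^\times/2$ into $u(-D)=\#\OO_{-D}^\times$.

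You reconstruct the content of that proposition directly in two steps:
\begin{enumerate}
\item Norm-$D$ vectors of $L_i$ correspond bijectively to optimal embeddings $\OO_{-D}\hookrightarrow R_i$. Optimality is automatic because $-D$ is fundamental, so $\OO_{-D}$ is maximal.
\item Orbit--stabilizer for the conjugation action of $R_i^\times$ gives orbits of size $2w_i/u(-D)$. The stabilizer is $\phi(\OO_{-D}^\times)$ because $\phi(K)$ is its own centralizer.
\end{enumerate}

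The citation is shorter, but it rests entirely on reconciling two conventions: the $\frac12$ in the theta series and the halved unit count. That is exactly the kind of factor of $2$ the later parity arguments cannot afford to lose. Your version makes the factor $2w_i/u(-D)$ visible, treats $D=3,4$ uniformly, and shows precisely where fundamentality of $-D$ is used.

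Two small repairs are needed. First, your assertion that $\tr(r)\equiv D\pmod 2$ needs the line $D=\Nr(2r-\tr(r))=4\Nr(r)-\tr(r)^2$. Second, your alternative integrality argument via Lagrange does not work. The divisibility $(u(-D)/2)\mid w_i$ only makes the orbit size $2w_i/u(-D)$ an integer, whereas $m_i(D)=\frac12\, h_i(-D)\cdot\frac{2w_i}{u(-D)}$. In the generic case $w_i=1$, $u(-D)=2$, this equals $h_i(-D)/2$. So integrality genuinely depends on the evenness of $N_i(D)$ coming from $x\mapsto -x$, which your first argument already supplies.
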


\begin{proof}
This is Gross's description of the coefficients of the ternary theta
series associated with $\Z+2R_i$; see
\cite[(12.7)--(12.8) and Proposition~12.9]{Gross1987}.
Indeed, Gross writes
\[
  \frac12\sum_{x\in L_i}q^{\Nr(x)}
  =\frac12+\sum_{D>0}a_i(D)q^D,
\]
so that $a_i(D)=N_i(D)/2$.  For $-D$ fundamental,
Proposition~12.9 gives
\[
  a_i(D)=\frac{w_i}{2}\frac{h_i(-D)}{u_{\rm Gross}(-D)}.
\]
Gross uses $u_{\rm Gross}(-D)=\#\OO_{-D}^{\times}/2$, whereas our
convention is $u(-D)=\#\OO_{-D}^{\times}$. Hence
\[
  a_i(D)=\frac{w_i h_i(-D)}{u(-D)}=m_i(D),
\]
which proves the assertion.
\end{proof}

\subsection{Gross orthogonality and the rational relation}

\begin{proposition}[Gross orthogonality]\label{prop:gross-orthogonality}
Let $E/\Q$ be an elliptic curve of prime conductor $p>3$ and positive Mordell--Weil rank, and let $v_E$ be its primitive integral Brandt eigenvector.  Then, for every admissible $-D$,
\[
  \langle v_E,b_D\rangle=0.
\]
\end{proposition}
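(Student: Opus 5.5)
The plan is to combine three inputs: Kolyvagin--Logachev to get $L(E,1)=0$, Gross's central-value formula to express twisted central values through $\langle v_E,b_D\rangle^2$, and the factorization of the base-change $L$-function to pass from $L(E,1)=0$ to the vanishing of $L(E\otimes\chi_{-D},1)\,L(E,1)$.

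\textbf{Step 1: vanishing of $L(E,1)$.} Since $\operatorname{rank}E(\Q)>0$, the contrapositive of Kolyvagin--Logachev \cite[Theorem~0.3]{KolyvaginLogachev1990} gives $L(E,1)=0$. That theorem says that $L(E,1)\neq0$ implies $E(\Q)$ is finite; modularity of $E$ is known.

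\textbf{Step 2: Gross's formula.} For an admissible $-D$, let $K=\Q(\sqrt{-D})$. Then $p$ is inert in $K$, and the conductor $p$ is coprime to $D$ because $\leg{-D}{p}=-1$ forces $p\nmid D$. These are exactly the hypotheses of \cite[Proposition~13.5]{Gross1987}, which reads
\[
  L(f_E,1)\,L(f_E\otimes\chi_{-D},1)=\frac{(f_E,f_E)}{\sqrt{D}\,(v_E,v_E)}\,\langle v_E,b_D\rangle^2 ,
\]
with a nonzero normalizing constant. I would check only that the pairing and the normalization of $b_D$ agree with Gross's up to a nonzero scalar.

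On that point, our $u(-D)$ differs from Gross's by a factor of $2$. This rescales $b_D$ by a nonzero constant, and vanishing of the pairing is insensitive to that. Our Brandt pairing $\langle e_i,e_j\rangle=w_i\delta_{ij}$ is Gross's height pairing. The eigenvector $v_E$ spans the $f_E$-isotypic line, so it is Gross's $e_f$ up to a nonzero scalar. The root number does not obstruct the formula: when $\varepsilon(E)=-1$, both sides vanish automatically.

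\textbf{Step 3: conclusion.} By Step 1 the left-hand side vanishes, and $(f_E,f_E)\neq0$, $(v_E,v_E)>0$. Hence $\langle v_E,b_D\rangle^2=0$, so $\langle v_E,b_D\rangle=0$.

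The only real obstacle is bookkeeping: confirming that Gross's Proposition~13.5 applies to \emph{every} admissible $-D$ and not just $D>4$. For $D=3,4$ the units are larger, but Gross's formula includes these cases with the $u$ factor absorbed into $b_D$, so nonzero scaling again settles it.
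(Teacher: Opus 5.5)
Your proposal is correct and follows the paper's proof: Kolyvagin--Logachev gives $L(E,1)=0$, and Gross's central-value formula \cite[Proposition~13.5]{Gross1987}, with its nonzero constant, then forces $\langle v_E,b_D\rangle^2=0$. Your extra normalization remarks are harmless bookkeeping that the paper leaves implicit: the factor-of-$2$ unit convention only rescales $b_D$, the formula is invariant under scaling $v_E$, and the cases $D=3,4$ are covered by Gross's unit factor.
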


\begin{proof}
Under the present fundamental-discriminant and inertness hypotheses, Gross's central-value formula gives
\[
  L(E,1)L(E\otimes\varepsilon_{-D},1)
  =C(E,D)\frac{\langle v_E,b_D\rangle^2}{\langle v_E,v_E\rangle},
  \qquad C(E,D)\ne0;
\]
see \cite[Proposition~13.5]{Gross1987} and \cite[Proposition~24]{KazalickiKohen2017}.  By Kolyvagin--Logachev, $L(E,1)\ne0$ implies that $E(\Q)$ is finite \cite[Theorem~0.3]{KolyvaginLogachev1990}.  Thus positive Mordell--Weil rank implies $L(E,1)=0$, and the formula forces $\langle v_E,b_D\rangle=0$.
\end{proof}

Frobenius transports optimal embeddings in $R_i$ bijectively to optimal embeddings in $R_{\bar i}$, and therefore
\[
  m_{\bar i}(D)=m_i(D).
\]
Combining this symmetry with Lemma~\ref{lem:frob-sign} turns the exact Gross relation into a relation supported only on $S_p$ modulo $2$.

\begin{corollary}[Rational Gross relation]\label{cor:rational-relation}
Under the hypotheses of Proposition~\ref{prop:gross-orthogonality},
\[
  \sum_{i\in S_p}c_i m_i(D)\equiv0\pmod2
\]
for every admissible $-D$.
\end{corollary}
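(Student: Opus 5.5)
We start from the exact identity of Proposition~\ref{prop:gross-orthogonality}. Expanding with the Brandt pairing $\langle e_i,e_j\rangle=w_i\delta_{ij}$ and $m_i(D)=\langle e_i,b_D\rangle$, we get
\[
  0=\langle v_E,b_D\rangle=\sum_{i\in S}c_i\langle e_i,b_D\rangle=\sum_{i\in S}c_i m_i(D).
\]
This is an equality in $\Z$, because each $m_i(D)$ is an integer by Lemma~\ref{lem:normalization}. We may therefore reduce it modulo $2$. We then split $S$ into $S_p$ and its complement $S\setminus S_p$.

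On $S\setminus S_p$, Frobenius conjugation $i\mapsto\bar i$ is a fixed-point-free involution. This is because $\bar i=i$ exactly when $j(E_i)\in\F_p$, since Frobenius acts on $j$-invariants by $j\mapsto j^p$. Hence $S\setminus S_p$ decomposes into disjoint two-element orbits $\{i,\bar i\}$. For each such pair, the symmetry $m_{\bar i}(D)=m_i(D)$ noted just before the corollary, combined with Lemma~\ref{lem:frob-sign} ($c_{\bar i}=\varepsilon(E)c_i$), gives
\[
  c_im_i(D)+c_{\bar i}m_{\bar i}(D)=(1+\varepsilon(E))\,c_i m_i(D)\equiv 0\pmod 2,
\]
since $1+\varepsilon(E)\in\{0,2\}$. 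Summing over the pairs kills the non-rational part modulo $2$, and what remains is $\sum_{i\in S_p}c_im_i(D)\equiv0\pmod2$.

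The only step needing care is the symmetry $m_{\bar i}(D)=m_i(D)$, which is justified as follows. The $p$-power Frobenius isogeny $E_i\to E_i^{(p)}=E_{\bar i}$ induces a ring isomorphism $R_i\cong R_{\bar i}$: concretely, $\sigma\colon\phi\mapsto\phi^{(p)}$, the action on coefficients. This isomorphism carries optimal embeddings of $\OO_{-D}$ to optimal embeddings and $R_i^\times$-conjugacy to $R_{\bar i}^\times$-conjugacy. It also gives $w_i=w_{\bar i}$. Hence $h_i(-D)=h_{\bar i}(-D)$, and so $m_i(D)=m_{\bar i}(D)$. Equivalently, $\sigma$ is an isometry $L_i\cong L_{\bar i}$, so $N_i(D)=N_{\bar i}(D)$. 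No real obstacle is expected here.
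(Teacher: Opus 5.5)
Your proposal is correct and follows essentially the same route as the paper. Both arguments expand $\langle v_E,b_D\rangle=0$ as $\sum_{i\in S}c_im_i(D)=0$, then cancel each Frobenius pair $\{i,\bar i\}$ modulo $2$ using $m_{\bar i}(D)=m_i(D)$ and Lemma~\ref{lem:frob-sign}. Your extra steps (integrality of $m_i(D)$, the fixed-point-free involution off $S_p$, and transport of optimal embeddings via $\phi\mapsto\phi^{(p)}$) just spell out details the paper states in one line.
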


\begin{proof}
Expanding the exact pairing gives $0=\sum_i c_i m_i(D)$.  If $i\notin S_p$, then $i\ne\bar i$ and the pair $\{i,\bar i\}$ contributes
\[
  c_i m_i(D)+c_{\bar i}m_{\bar i}(D)\equiv0\pmod2
\]
by Lemma~\ref{lem:frob-sign}.  Only the Frobenius-fixed classes remain.
\end{proof}

Two rational supersingular coordinates can be produced without class-field theory and will serve as exceptional coordinates in the prime and semiprime constructions.

\begin{lemma}[Exceptional rows]\label{lem:exceptional-rows}
If $j=0$ is supersingular, then $-3$ is admissible and
\[
  \crow(3)=e_0.
\]
If $j=1728$ is supersingular, then $-4$ is admissible and
\[
  \crow(4)=e_{1728}.
\]
For a positive-rank prime-conductor elliptic curve, the corresponding exact Gross relations give $c_0=0$ and $c_{1728}=0$ whenever these classes occur.
\end{lemma}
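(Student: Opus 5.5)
The plan is to compute the Gross coefficients for $D=3$ and $D=4$ directly from the embedding description $m_i(D)=w_ih_i(-D)/u(-D)$.

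\textbf{Admissibility.} The curve $j=0$ has CM by $\OO_{-3}$. It is supersingular exactly when $p$ does not split in $\Q(\sqrt{-3})$. For $p>3$ this means $p$ is inert, so $\leg{-3}{p}=-1$ and $-3$ is admissible. Likewise $j=1728$ is supersingular exactly when $p\equiv3\pmod4$, i.e. $\leg{-4}{p}=-1$. Both $-3$ and $-4$ are fundamental discriminants.

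\textbf{Support of the CM vector.} Next I determine which $i$ satisfy $h_i(-D)\neq0$. An optimal embedding $\OO_{-3}\hookrightarrow R_i$ lifts, by Deuring lifting, to a CM curve reducing to $E_i$. Equivalently, since $\Z[\zeta_3]$ is the full maximal order, $R_i^\times$ contains an element of order $3$, so $\operatorname{Aut}(E_i)\supsetneq\{\pm1\}$ with an order-$3$ automorphism. For $p>3$ this forces $j(E_i)=0$. Hence $b_3$ is supported only on $e_0$, and in the same way $b_4$ is supported only on $e_{1728}$, using an automorphism of order $4$.

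\textbf{The coefficient itself.} I then compute the surviving coefficient. For $j=0$ we have $R_0^\times\cong\Z/6$, so $w_0=3$ and $u(-3)=6$. The number of optimal embeddings modulo $R_0^\times$-conjugacy is $h_0(-3)$. It equals the class number $h(-3)=1$ times $2$, the two embeddings being $\zeta\mapsto\zeta_3$ and $\zeta\mapsto\zeta_3^{-1}$, because $p$ is inert; Gross's count of total embeddings is $2h(-D)$ in the inert case, i.e. $\sum_i h_i(-D)=2h(-D)$. These two embeddings are not conjugate by $R_0^\times$, which is commutative and equals $\OO_{-3}^\times$. This gives $m_0(3)=3\cdot2/6=1$.

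A safer cross-check is to count norm-$3$ vectors in $L_0$ via Lemma~\ref{lem:normalization}. The vectors are $x=\pm(2\zeta_3+1)=\pm\sqrt{-3}$, and those arising from the other embedding coincide with these up to sign. So $N_0(3)=2$ and $m_0(3)=1$. For $j=1728$ the analogous computation gives $w=2$, $u(-4)=4$ and $h=2$, hence $m_{1728}(4)=1$; in the lattice, the norm-$4$ vectors are $\pm2i$, so $N=2$. Reducing mod $2$ and restricting to $S_p$ (both $j$-values lie in $\F_p$) gives $\crow(3)=e_0$ and $\crow(4)=e_{1728}$.

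\textbf{Consequence for positive rank.} For the elliptic-curve statement, Proposition~\ref{prop:gross-orthogonality} gives the exact relation $0=\langle v_E,b_D\rangle=\sum_i c_im_i(D)$. Since $b_3$ is supported on $e_0$ alone with $m_0(3)=1$, this reads $c_0=0$, and likewise $c_{1728}=0$.

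\textbf{Main obstacle.} The delicate step is the exact count of the embeddings. One must decide whether the two embeddings (complex conjugate to each other) are conjugate under $R_i^\times$, and whether further norm-$D$ vectors in $L_i$ beyond $\pm\sqrt{-D}$ might occur. I would settle this by the lattice count: any $x\in L_i$ with $\Nr(x)=3$ generates an order $\Z[x]$ with $(x+1)/2\in R_i$ (from $x\in\Z+2R_i$ and trace $0$), giving an order-$6$ unit. All such units lie in the unique subgroup $\OO_{-3}^\times=R_0^\times$, so $x=\pm\sqrt{-3}$.
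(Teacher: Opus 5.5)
Your proposal is correct and follows the paper's proof. The order-$3$ (resp.\ order-$4$) automorphism confines the support of $b_3$ (resp.\ $b_4$) to $j=0$ (resp.\ $j=1728$). The two oriented embeddings are not conjugate under the commutative unit group, which gives $m_0(3)=3\cdot2/6=1$ and $m_{1728}(4)=2\cdot2/4=1$, and Proposition~\ref{prop:gross-orthogonality} then yields $c_0=c_{1728}=0$.

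Your lattice cross-check via Lemma~\ref{lem:normalization} is extra and not needed. If you keep it, note that $(x+1)/2\in R_i$ also uses that $\Nr(x)=3$ is odd, not just $x\in\Z+2R_i$ and $\tr(x)=0$. Writing $x=a+2y$ with $\tr(x)=0$ gives $\Nr(x)=4\Nr(y)-a^2$, which forces $a$ odd.
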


\begin{proof}
The class $j=0$ is supersingular exactly when $p\equiv2\pmod3$, and an
embedding of $\OO_{-3}$ into $R_i=\End(E_i)$ forces $E_i$ to have an
automorphism of order $3$, hence $j(E_i)=0$.  There are two such oriented
embeddings modulo $R_0^\times$-conjugacy, so
\[
  m_0(3)=\frac{3\cdot2}{6}=1.
\]
Thus $\crow(3)=e_0$.  Similarly, $j=1728$ is supersingular exactly when
$p\equiv3\pmod4$; an embedding of $\OO_{-4}$ occurs only at $j=1728$, and
the two oriented embeddings give
\[
  m_{1728}(4)=\frac{2\cdot2}{4}=1.
\]
Hence $\crow(4)=e_{1728}$.  The final assertions follow from
Proposition~\ref{prop:gross-orthogonality}.
\end{proof}

Corollary~\ref{cor:rational-relation} says that $(c_i\bmod2)_{i\in S_p}$ lies in $\Rp^\perp$.  Hence Theorem~\ref{thm:spanning} immediately implies Corollary~\ref{cor:parity}.  From this point on, the elliptic curve $E$ no longer enters the proof: the remaining task is the purely quaternionic statement that the vectors $\crow(D)$ span $\F_2^{S_p}$.

\section{Frobenius reduction and binary quadratic forms}\label{sec:binary}

Section~\ref{sec:gross} reduced the elliptic-curve application to the
parity of the integers $m_i(D)$. We now show that, at a rational
supersingular class, this parity is controlled by a binary rather than a
ternary quadratic form. We then show that distinct supersingular
coordinates give distinct inverse orbits and record the
genus-theoretic condition that will ensure admissibility of the
discriminants used later.

\subsection{The Frobenius-perpendicular lattice}

Fix $i\in S_p$ and choose an $\F_p$-model of $E_i$.  Its Frobenius $\pi_i\in R_i$ satisfies
\[
  \tr(\pi_i)=0,
  \qquad \pi_i^2=-p,
  \qquad \Nr(\pi_i)=p.
\]
Orthogonality in the trace-zero quaternion space will always refer to the
symmetric bilinear form obtained by polarizing the reduced norm:
\[
  (x,y)=\frac12\bigl(\Nr(x+y)-\Nr(x)-\Nr(y)\bigr)
       =-\frac12\tr(xy).
\]

\begin{definition*}
The \emph{Frobenius-perpendicular lattice} attached to $i$ is
\[
  L_i^\perp=L_i\cap(\Q\pi_i)^\perp.
\]
For $D>0$, write
\[
  N_i^\perp(D)=\#\{x\in L_i^\perp:\Nr(x)=D\}.
\]
\end{definition*}

For $j(E_i)\ne0,1728$, the two $\F_p$-models in the geometric class
$i$ are quadratic twists. After identifying them over
$\overline{\F}_p$, their Frobenius endomorphisms differ by sign. Hence
the line $\Q\pi_i$, and therefore $L_i^\perp$, depends only on the
nonexceptional geometric supersingular class $i$. For $j(E_i) \in \{0,1728\}$, we
retain the chosen $\F_p$-model; these coordinates are handled separately
by Lemma~\ref{lem:exceptional-rows}.

Kaneko's calculations with Ibukiyama's explicit maximal orders implicitly
produce the same binary norm forms, while He--Korpal--Tran--Vincent
explicitly construct the corresponding rank-two sublattices of the Gross
lattice; here we identify these sublattices intrinsically as the
Frobenius-perpendicular lattices $L_i^\perp$
\cite{Kaneko1989,Ibukiyama1982,HeKorpalTranVincent}.

Here the key point is the following parity reduction.

\begin{proposition}[Frobenius parity reduction]\label{prop:four-orbit}
If $p\nmid D$, then
\[
  m_i(D)\equiv \frac{N_i^\perp(D)}2\pmod2.
\]
\end{proposition}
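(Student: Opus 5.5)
By Lemma~\ref{lem:normalization}, $N_i(D)=2m_i(D)$. The claim is therefore equivalent to
\[
  N_i(D)\equiv N_i^\perp(D)\pmod 4 .
\]
The plan is to let a group of order $4$ act on the norm-$D$ vectors of $L_i$ and show that all orbits outside $L_i^\perp$ have exactly four elements. The group is generated by $x\mapsto -x$ and $\sigma(x)=\pi_i x\pi_i^{-1}$.

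\textbf{Step 1: the group action.} Since $\pi_i\in R_i$ and $\pi_i^2=-p$ is central, $\sigma$ is an involution of $B_{p,\infty}$ that preserves trace and reduced norm. I must check that $\sigma$ preserves $\Z+2R_i$. Conjugation by an element of $R_i$ of norm $p$ maps $R_i$ into $\pi_iR_i\pi_i^{-1}$. Because $\pi_i$ generates the unique two-sided ideal of $R_i$ above $p$, we have $\pi_iR_i=R_i\pi_i$, so $\sigma(R_i)=R_i$. Geometrically this is just Frobenius acting on $\End(E_i)$, since $E_i$ is defined over $\F_p$. Hence $\sigma$ preserves $L_i$, and it commutes with $-1$. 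So $G=\{\pm1,\pm\sigma\}$ acts on $\{x\in L_i:\Nr(x)=D\}$, and $L_i^\perp$ is $G$-stable.

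\textbf{Step 2: orbit sizes.} For trace-zero $x$, write $x=a\pi_i+y$ with $a\in\Q$ and $y\perp\pi_i$. On pure quaternions $\sigma$ fixes $\pi_i$ and acts by $-1$ on $(\Q\pi_i)^\perp$: it is the rotation by $\pi$ about the axis $\pi_i$, since $\pi_iy=-y\pi_i$ for $y$ anticommuting with $\pi_i$. Thus $\sigma(x)=a\pi_i-y$. The four elements $\pm x,\pm\sigma x$ are distinct exactly when $x\ne0$, $a\ne0$ and $y\ne0$. A norm-$D$ vector with $D>0$ is nonzero. If $a=0$, then $x\in L_i^\perp$. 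If $y=0$, then $x\in\Q\pi_i$, so $\Nr(x)=a^2p$.

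\textbf{Step 3: excluding the axis.} Suppose $y=0$, so $x=a\pi_i$. Writing $a=r/s$ in lowest terms, $D=r^2p/s^2$ is an integer, so $s^2\mid r^2p$. Since $p$ is squarefree this forces $s=1$, so $p\mid D$, contradicting the hypothesis $p\nmid D$. Therefore every norm-$D$ vector outside $L_i^\perp$ lies in a free $G$-orbit of size $4$. This gives $N_i(D)-N_i^\perp(D)\equiv0\pmod 4$. Dividing by $2$ and using Lemma~\ref{lem:normalization} yields $m_i(D)\equiv N_i^\perp(D)/2\pmod 2$. Note that $N_i^\perp(D)$ is even because of the $\pm$ symmetry.

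\textbf{Main obstacle.} The delicate step is Step~1, the stability of $\Z+2R_i$ (not merely of $R_i$) under $\sigma$. It follows immediately once $\sigma(R_i)=R_i$, which I would justify via the Frobenius endomorphism normalizing $\End(E_i)$. The Galois-theoretic version is that conjugation by $\pi_i$ realizes the action of $\Gal(\overline{\F}_p/\F_p)$ on endomorphisms, $\phi\mapsto\phi^{(p)}$, and that action visibly preserves $\End(E_i)$.
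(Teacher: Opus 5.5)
Your proposal is correct and is essentially the paper's own proof. The paper likewise groups the norm-$D$ vectors outside $L_i^\perp$ into four-element orbits $\{\pm x,\pm\pi_i x\pi_i^{-1}\}$, rules out the Frobenius line using $p\nmid D$ (via $v_p(D)=1+2v_p(t)$ being odd), and divides $N_i(D)\equiv N_i^\perp(D)\pmod 4$ by two using Lemma~\ref{lem:normalization}; your added justification that $\pi_iR_i=R_i\pi_i$, because $\pi_i$ generates the two-sided prime of $R_i$ above $p$, fills in a step the paper simply asserts through the Galois action.
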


\begin{proof}
Conjugation by Frobenius,
\[
  \Theta_i(x)=\pi_i x\pi_i^{-1},
\]
is the Galois action on geometric endomorphisms.  It preserves $R_i$, hence also $L_i$ and the reduced norm.  On the trace-zero quaternion space, its $+1$-eigenspace is the line $\Q\pi_i$ and its $-1$-eigenspace is $(\Q\pi_i)^\perp$; in particular, $\Theta_i(x)=-x$ is equivalent to $x\in(\Q\pi_i)^\perp$.

A norm-$D$ vector cannot lie on the Frobenius line: if $x=t\pi_i$ with $t\in\Q$, then $D=pt^2$, so
$v_p(D)=1+2v_p(t)$ is odd, contradicting $p\nmid D$.  If $x\notin L_i^\perp$, then $x$ lies in neither eigenspace, and
\[
  x,\quad -x,\quad \Theta_i(x),\quad -\Theta_i(x)
\]
are four distinct norm-$D$ vectors in $L_i$.  Hence
\[
  N_i(D)\equiv N_i^\perp(D)\pmod4.
\]
The involution $x\mapsto-x$ acts without fixed points on the norm-$D$ vectors in $L_i^\perp$, so $N_i^\perp(D)$ is even.  Lemma~\ref{lem:normalization} gives $N_i(D)=2m_i(D)$, and dividing the preceding congruence by two proves the result.
\end{proof}

Thus, modulo $2$, all norm-$D$ vectors outside the Frobenius anti-invariant plane cancel in four-element orbits.  The problem has been reduced to the binary lattice $L_i^\perp$.

\begin{remark}[Theta-series factorization modulo $4$]
Proposition~\ref{prop:four-orbit} is an instance of a general mod-$4$
factorization of theta series. Let $L$ be a positive-definite integral
lattice equipped with a norm-preserving involution $\sigma$, and put
\[
  L^{+}=\{x\in L:\sigma(x)=x\},
  \qquad
  L^{-}=\{x\in L:\sigma(x)=-x\}.
\]
Writing
\[
  \theta_M(q)=\sum_{x\in M}q^{\Nr(x)},
\]
one has the coefficientwise congruence
\[
  \theta_L(q)\equiv
  \theta_{L^{+}}(q)\theta_{L^{-}}(q)
  \pmod4.
\]
Indeed, grouping the vectors outside $L^{+}\cup L^{-}$ into
four-element orbits under $\sigma$ and negation gives
\[
  \theta_L\equiv\theta_{L^{+}}+\theta_{L^{-}}-1\pmod4.
\]
Moreover, every nonzero vector of $L^{+}$ and $L^{-}$ occurs together
with its negative, so every nonconstant coefficient of
$\theta_{L^{+}}-1$ and $\theta_{L^{-}}-1$ is even. Hence
\[
  (\theta_{L^{+}}-1)(\theta_{L^{-}}-1)\equiv0\pmod4,
\]
and therefore
\[
  \theta_L\equiv\theta_{L^{+}}\theta_{L^{-}}\pmod4.
\]

In the present setting, $\sigma$ is conjugation by Frobenius,
\[
  L_i^{+}=L_i\cap\Q\pi_i,
  \qquad
  L_i^{-}=L_i^\perp,
\]
and hence
\[
  \theta_{L_i}(q)\equiv
  \theta_{L_i^{+}}(q)\theta_{L_i^\perp}(q)
  \pmod4.
\]
\end{remark}

\subsection{Explicit Frobenius-compatible models}

To determine the norm form on $L_i^\perp$, it is not enough to know the
abstract maximal order $R_i$: one must also keep track of the Frobenius
line $\Q\pi_i$. The embedded quadratic order
\[
  \mathfrak o_i
  =R_i\cap\Q(\pi_i)
  =\End_{\F_p}(E_i)
\]
is either $\Z[\pi_i]$ or $\Z[(1+\pi_i)/2]$, and this determines which of
Ibukiyama's two explicit models applies.

Ibukiyama's models use an auxiliary prime $q\equiv3\pmod8$ with $\leg{p}{q}=-1$.  Write
\[
 B_{p,\infty}=\Q+\Q\alpha+\Q\beta+\Q\alpha\beta,
 \qquad
 \alpha^2=-p,\quad \beta^2=-q,\quad \alpha\beta=-\beta\alpha.
\]
If $r^2+p\equiv0\pmod q$, put
\[
 O(q,r)=\Z+\Z\frac{1+\beta}{2}
       +\Z\frac{\alpha(1+\beta)}2
       +\Z\frac{(r+\alpha)\beta}{q}.
\]
When $p\equiv3\pmod4$ and $(r')^2+p\equiv0\pmod{4q}$, put
\[
 O'(q,r')=\Z+\Z\frac{1+\alpha}{2}
       +\Z\beta
       +\Z\frac{(r'+\alpha)\beta}{2q}.
\]

\begin{proposition}[Ibukiyama models preserving Frobenius]
\label{prop:pair-model}
Assume $p>5$ and $j(E_i)\ne0,1728$. For suitable $q$ and $r$ or $r'$
as above, there is an inner automorphism of $B_{p,\infty}$ sending
\[
  (R_i,\pi_i)
\]
to
\[
  (O(q,r),\pm\alpha)
  \qquad\text{or}\qquad
  (O'(q,r'),\pm\alpha).
\]
The first case occurs when $\mathfrak o_i=\Z[\pi_i]$, and the second
when $\mathfrak o_i=\Z[(1+\pi_i)/2]$.
\end{proposition}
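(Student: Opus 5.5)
The plan is to construct the auxiliary element $\beta$ directly inside $R_i$, as a suitable vector of the Frobenius-perpendicular lattice, and then to recognize $R_i$ as an Ibukiyama order by a local computation at $q$. The first step is to find $x\in L_i^\perp$ with $\Nr(x)=q$, where $q$ is a prime satisfying the congruence conditions of the model (for $O(q,r)$: $q\equiv3\pmod 8$ and $\leg{p}{q}=-1$). Since $x\perp\pi_i$ and both have trace zero, $x\pi_i=-\pi_i x$. Moreover $x^2=-q$ and $\pi_i^2=-p$. The assignment $\pi_i\mapsto\alpha$, $x\mapsto\beta$ therefore defines an isomorphism of quaternion algebras, and by Skolem--Noether it is inner. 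Replacing $x$ by $-x$, or composing with conjugation by $\beta$ (which sends $\alpha\mapsto-\alpha$), accounts for the sign $\pm\alpha$ in the statement.

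Next I check which elements this places in the image order. Write $x=2y-t$ with $y\in R_i$ and $t\in\Z$. Then $\Nr(x)\equiv t^2\pmod 4$ up to terms with even coefficients, so $\Nr(x)=q$ odd forces $t$ odd. It follows that $(1+x)/2=y+(1-t)/2\in R_i$.

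In the case $\mathfrak o_i=\Z[\pi_i]$, the image of $R_i$ contains the order
\[
\Lambda=\Z+\Z\tfrac{1+\beta}{2}+\Z\tfrac{\alpha(1+\beta)}{2}+\Z\alpha\beta .
\]
A discriminant computation shows that $\Lambda$ has reduced discriminant $pq$, hence index $q$ in any maximal order containing it. So the image of $R_i$ is $\Lambda$ plus one extra generator. Away from $q$, $\Lambda$ is already maximal. At $q$, $\Lambda\otimes\Z_q=\Z_q[\alpha]+\Z_q[\alpha]\beta$, and $\alpha$ generates the split algebra $\Z_q[\sqrt{-p}]$ because $\leg{-p}{q}=1$. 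The maximal orders containing it are exactly $\Lambda_q+\Z_q\,(r+\alpha)\beta/q$ for the two roots $r$ of $r^2+p\equiv0\pmod q$. Gluing the local pieces gives $R_i\simeq O(q,r)$.

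In the case $\mathfrak o_i=\Z[(1+\pi_i)/2]$ (so $p\equiv3\pmod4$), the same argument applies with $\Lambda'=\Z[(1+\alpha)/2]+\Z[(1+\alpha)/2]\beta$, and gives $O'(q,r')$. Here $\beta$ itself, rather than $(1+\beta)/2$, is the generator recorded in the model. The extra local factor at $2$ forces the condition $(r')^2+p\equiv0\pmod{4q}$. Optimality matters here: the marked order $\mathfrak o_i$ must be exactly $O\cap\Q(\alpha)$, which distinguishes the two cases. I would verify this by noting that $(1+\alpha)/2\notin O(q,r)$ and $(1+\alpha)/2\in O'(q,r')$.

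The main obstacle is the first step: showing that the binary form $\Nr|_{L_i^\perp}$ primitively represents a prime $q$ satisfying the required congruence conditions. A primitive positive-definite binary form represents infinitely many primes in any arithmetic progression compatible with its genus, by Chebotarev in the ring class field composed with a cyclotomic field. So it suffices to compute the discriminant and genus characters of $L_i^\perp$ abstractly and check compatibility. From the $2$-adic and $p$-adic structure of $\Z+2R_i$, the discriminant should be $-16p$ or $-4p$ times a square. I would first verify by local computation that the conditions $q\equiv3\pmod 8$ and $\leg{p}{q}=-1$ (respectively the conditions for $O'$) lie in a genus value the form attains. The hypotheses $p>5$ and $j\ne0,1728$ should enter here: they exclude degenerate small cases where $L_i^\perp$ has extra automorphisms or the $\F_p$-model is not unique up to twist.
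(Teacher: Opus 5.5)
You take a different route from the paper. The paper's proof uses Skolem--Noether to send $\pi_i$ to $\pm\alpha$, then cites Ibukiyama's classification (his Theorems~1--2 and Propositions~4.1--4.2) to get an isomorphism with $O(q,r)$ or $O'(q,r')$ preserving the optimal quadratic order. You are in effect reproving that classification by building $\beta$ inside $R_i$, which, if completed, would remove the dependence on the citation. Your recognition step is correct:
for $x=2y-t\in L_i^\perp$ one has $\tr(y)=t$ and $\Nr(x)=4\Nr(y)-t^2$, so odd norm forces $t$ odd and $(1+x)/2\in R_i$.
$\Lambda$ has reduced discriminant $pq$.
$\Lambda\otimes\Z_q$ is an Eichler order of level $q$ whose two maximal overorders are the completions of $O(q,r)$ at the two roots $r$.

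Your construction of $\beta$ fails, however, when $\mathfrak o_i=\Z[(1+\pi_i)/2]$, because then $L_i^\perp$ contains no vector of odd norm. Suppose $x\in L_i^\perp$ had odd prime norm $q$. Then $R_i$ would contain $1,\frac{1+\pi_i}2,\frac{1+x}2,\frac{(1+\pi_i)(1+x)}4$. These span an overlattice of index $16$ of $\Z\langle\pi_i,x\rangle$, yet that order has reduced discriminant $4pq$ and hence index $4q$ in $R_i$. (In fact $L_i^\perp$ is then $4$ times a primitive form of discriminant $-p$.) So ``the same argument'' cannot produce $\beta$. You must instead look for $\beta$ of norm $q$ in $M_i=R_i\cap(\Q\pi_i)^\perp$, which in this case equals $\tfrac12L_i^\perp$. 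Given such a $\beta$, $\Lambda'\subseteq R_i$, and your local argument at $q$ does yield $O'(q,r')$ with $r'$ chosen odd in the appropriate class modulo $q$.

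The second gap is the one you flag, and it is the real content of the proposition. You must produce a prime $q\equiv3\pmod8$, $q\ne p$, represented by $L_i^\perp$ in the first case or by $M_i$ in the second. Part of this comes for free. In the first case, any odd prime $q$ represented by $L_i^\perp$ is $\equiv3\pmod4$ by the identity above. Also $\leg{-p}{q}=1$, since otherwise $(-p,-q)_q=-1$ and $B_{p,\infty}$ would ramify at $q$. So $\leg{p}{q}=-1$ is automatic, and only the condition modulo $8$ needs Chebotarev.

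Running Chebotarev in the ring class field composed with $\Q(\zeta_8)$ requires knowing the discriminant of the binary lattice, which you only guess. You need an intrinsic computation from the pair $(R_i,\pi_i)$. One way is a discriminant comparison for $\mathfrak o_i\oplus M_i\subseteq R_i$; the index divides $4$, since $x+\pi_ix\pi_i^{-1}\in\mathfrak o_i$ for $x\in R_i$. Combine this with an explicit computation in $R_i\otimes\Z_2\cong M_2(\Z_2)$ using an optimal embedding of $\mathfrak o_i\otimes\Z_2$. The goal is to show that $L_i^\perp$ is primitive of discriminant $-16p$ when $\mathfrak o_i=\Z[\pi_i]$, and that $M_i$ is primitive of discriminant $-p$ otherwise. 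Then the ring class field meets $\Q(\zeta_8)$ inside the genus field $\Q(\sqrt{-p},i)$, resp.\ $\Q(\sqrt{-p})$, hence in $\Q(i)$, resp.\ $\Q$. So $q\equiv3\pmod8$ can be imposed together with the prescribed class.

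Until this is done, the proposal rests on exactly the fact the paper imports from Ibukiyama. Note also that the hypotheses $p>5$ and $j\ne0,1728$ play no role here; in the paper they only make the line $\Q\pi_i$ independent of the chosen $\F_p$-model.
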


\begin{proof}
By Skolem--Noether we may first identify
$\Q(\pi_i)$ with $\Q(\alpha)$ so that $\pi_i$ is sent to
$\pm\alpha$. Under this identification the optimal quadratic order
\[
  \mathfrak o_i=R_i\cap\Q(\pi_i)
\]
becomes $\Z[\alpha]$ or $\Z[(1+\alpha)/2]$.

Ibukiyama's Theorems~1--2 give the corresponding standard maximal
orders, and the relative classification in
\cite[Section~4, Propositions~4.1--4.2]{Ibukiyama1982}
allows the isomorphism to be chosen preserving the specified optimal
quadratic order. Hence the distinguished Frobenius element is carried to
$\pm\alpha$. Since every $\Q$-algebra automorphism of
$B_{p,\infty}$ is inner, the assertion follows.
\end{proof}

The perpendicular norm forms can now be read off directly.

\begin{proposition}[Perpendicular norm forms]\label{prop:perp-forms}
Under the isomorphism of Proposition~\ref{prop:pair-model}, the lattice $L_i^\perp$ has the following descriptions.
\begin{enumerate}[label=\textup{(\alph*)}]
\item In $O(q,r)$, a basis is
\[
  \beta,\qquad \frac{2r\beta+2\alpha\beta}{q},
\]
and the norm form is
\[
  Q_{q,r}(x,y)=qx^2+4rxy+\frac{4(r^2+p)}q y^2.
\]
It is primitive of discriminant $-16p$.
\item In $O'(q,r')$, a basis is
\[
  2\beta,\qquad\frac{r'\beta+\alpha\beta}{q},
\]
and the norm form is $4Q'_{q,r'}$, where
\[
  Q'_{q,r'}(x,y)=qx^2+r'xy+\frac{(r')^2+p}{4q}y^2.
\]
The form $Q'_{q,r'}$ is primitive of discriminant $-p$.
\end{enumerate}
\end{proposition}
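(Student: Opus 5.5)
Under the inner automorphism of Proposition~\ref{prop:pair-model} the line $\Q\pi_i$ becomes $\Q\alpha$. The plan is therefore to compute, inside each explicit order, the lattice $\{x\in\Z+2O:\tr(x)=0,\ (x,\alpha)=0\}$ and its norm form. Since $(x,\alpha)=-\tfrac12\tr(x\alpha)$, and $\alpha\beta=-\beta\alpha$ makes $\beta$ and $\alpha\beta$ orthogonal to $\alpha$ and to $1$, the trace-zero space perpendicular to $\alpha$ is exactly $\Q\beta+\Q\alpha\beta$. So $L_i^\perp=(\Z+2O)\cap(\Q\beta+\Q\alpha\beta)$. On this plane
\[
  \Nr(x\beta+y\alpha\beta)=qx^2+pq\,y^2,
\]
since $\Nr(\beta)=q$, $\Nr(\alpha\beta)=pq$, and $\beta\perp\alpha\beta$.

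For case (a), write a general element of $\Z+2O(q,r)$ as
\[
  a+2b\,\tfrac{1+\beta}{2}+2c\,\tfrac{\alpha(1+\beta)}{2}+2d\,\tfrac{(r+\alpha)\beta}{q}.
\]
Expanding gives coefficients of $1$, $\alpha$, $\beta$, $\alpha\beta$ equal to $a+b$, $c$, $b+2dr/q$, $c+2d/q$ respectively. Requiring the coefficients of $1$ and $\alpha$ to vanish forces $c=0$ and $a=-b$. The remaining element is $b\beta+d\,\tfrac{2r\beta+2\alpha\beta}{q}$ with $b,d\in\Z$, which is the stated basis.

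Next evaluate the norm. With $u=b+2rd/q$ and $v=2d/q$, we get $qu^2+pqv^2=qb^2+4rbd+\tfrac{4(r^2+p)}{q}d^2$, which is $Q_{q,r}$. The discriminant is $16r^2-16(r^2+p)=-16p$. For primitivity, $q$ is odd and $q\nmid 4r$ because $q\nmid p$, so the gcd of the coefficients is $1$.

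Case (b) is the same computation. The element is
\[
  a+2b\,\tfrac{1+\alpha}{2}+2c\beta+2d\,\tfrac{(r'+\alpha)\beta}{2q},
\]
and trace zero with no $\alpha$-component forces $b=0$ and $a=0$. This leaves $c\,(2\beta)+d\,\tfrac{r'\beta+\alpha\beta}{q}$. The norm is
\[
  q\Bigl(2c+\tfrac{r'd}{q}\Bigr)^2+pq\Bigl(\tfrac dq\Bigr)^2=4qc^2+4r'cd+\tfrac{(r')^2+p}{q}d^2 .
\]
This equals $4Q'_{q,r'}(c,d)$, and $(r')^2+p\equiv0\pmod{4q}$ makes $Q'_{q,r'}$ integral. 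Its discriminant is $(r')^2-((r')^2+p)=-p$. Primitivity holds again because $q\nmid r'$.

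The only delicate point is to use $\Z+2O$ rather than $O$. One must check that no extra elements of the plane $\Q\beta+\Q\alpha\beta$ arise from combinations whose $1$ and $\alpha$ parts cancel only after including $a$. The coordinate computation above handles this directly, because the four listed elements form a $\Z$-basis of $O$ and hence $1$ together with twice them spans $\Z+2O$. The sign $\pm\alpha$ does not affect the perpendicular plane, and the Frobenius-perpendicular lattice is an isometry invariant of the pair $(R_i,\Q\pi_i)$. Therefore the explicit computation transports back to $L_i^\perp$.
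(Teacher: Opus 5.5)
Your proposal is correct and follows essentially the same route as the paper. You expand a general element of $\Z+2O$ in the basis $1,\alpha,\beta,\alpha\beta$, use trace zero and vanishing of the $\alpha$-component to cut out the plane $\Q\beta+\Q\alpha\beta$, evaluate $\Nr((u+v\alpha)\beta)=q(u^2+pv^2)$ on the resulting basis, and get primitivity from $q\nmid r,r'$; your explicit bookkeeping of the $\Z+2O$ parametrization (forcing $a=-b$) simply spells out a step the paper states directly.
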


\begin{proof}
Consider first the order $O(q,r)$.  From its displayed basis, a general
trace-zero element of $\Z+2O(q,r)$ can be written as
\[
  b\beta+c(\alpha+\alpha\beta)
  +d\,\frac{2r\beta+2\alpha\beta}{q},
  \qquad b,c,d\in\Z.
\]
Since orthogonality to $\alpha$ is equivalent to vanishing of the
$\alpha$-component, we must have $c=0$.  Hence
\[
  L_i^\perp
  =\Z\beta+
   \Z\frac{2r\beta+2\alpha\beta}{q}.
\]

For $O'(q,r')$, a general trace-zero element of $\Z+2O'(q,r')$ has the form
\[
  b\alpha+2c\beta
  +d\,\frac{r'\beta+\alpha\beta}{q},
  \qquad b,c,d\in\Z.
\]
Orthogonality to $\alpha$ forces $b=0$, and therefore
\[
  L_i^\perp
  =\Z(2\beta)+
   \Z\frac{r'\beta+\alpha\beta}{q}.
\]

For $u,v\in\Q$, using
\[
  \alpha^2=-p,\qquad
  \beta^2=-q,\qquad
  \alpha\beta=-\beta\alpha,
\]
one obtains
\[
  \Nr((u+v\alpha)\beta)=q(u^2+pv^2).
\]
Substitution of the two displayed bases gives respectively
\[
  Q_{q,r}(x,y)
  =qx^2+4rxy+\frac{4(r^2+p)}q\,y^2
\]
and
\[
  4Q'_{q,r'}(x,y)
  =4\left(
      qx^2+r'xy+\frac{(r')^2+p}{4q}\,y^2
    \right).
\]
Their primitive parts have discriminants
\[
  (4r)^2-4q\frac{4(r^2+p)}q=-16p
\]
and
\[
  (r')^2-4q\frac{(r')^2+p}{4q}=-p.
\]
Finally, since $q$ is odd and $q\nmid r,r'$, the leading and middle
coefficients of each form are coprime. Hence both forms are primitive.
\end{proof}

The factor $4$ in the second case is important: every norm represented by $L_i^\perp$ in that branch is divisible by $4$.

\subsection{Matching form classes with supersingular classes}

The primitive forms obtained in Proposition~\ref{prop:perp-forms} are
exactly the forms associated by Xiao--Zhou--Deng--Qu with the corresponding
Ibukiyama orders; see
\cite[Section~3, especially Theorems~3.3 and~3.5]{XZDQ2025}.
We record the consequences of their parametrization that will be used
below.

For a nonexceptional $i\in S_p$, let $C_i$ denote the proper class of the
primitive form obtained from $L_i^\perp$. Reversing the orientation of the
binary lattice replaces a form $[a,b,c]$ by $[a,-b,c]$, and hence replaces
its proper class by its inverse. Thus the natural orientation-free object
attached to $i$ is the inverse orbit
\[
  \{C_i,C_i^{-1}\}.
\]

\begin{proposition}[Supersingular/form-class matching]
\label{prop:xzdq-compatibility}
Let $i\in S_p\setminus\{0,1728\}$. Under the
Xiao--Zhou--Deng--Qu parametrization, the two classes
\[
  C_i,\qquad C_i^{-1}
\]
correspond to the two $\F_p$-isomorphism classes in the quadratic-twist
pair with geometric supersingular class $i$. In particular, within either
branch the map
\[
  i\longmapsto\{C_i,C_i^{-1}\}
\]
is injective on the nonexceptional geometric supersingular classes, and
$C_i\ne C_i^{-1}$.

The form classes arising from the $O(q,r)$ branch are exactly the
nonexceptional classes in the nonprincipal genus of discriminant $-16p$.
The $O'(q,r')$ branch occurs only when $p\equiv3\pmod4$ and gives the
nonexceptional form classes of discriminant $-p$.
\end{proposition}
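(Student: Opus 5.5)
The plan is to reduce the statement to the explicit parametrization of Xiao--Zhou--Deng--Qu and check that our normalization agrees with theirs. By Proposition~\ref{prop:pair-model}, each nonexceptional $i\in S_p$ comes with a pair $(R_i,\pi_i)$ that is conjugate to $(O(q,r),\pm\alpha)$ or to $(O'(q,r'),\pm\alpha)$. In \cite[Theorems~3.3, 3.5, 3.7]{XZDQ2025}, $\F_p$-isomorphism classes of supersingular curves, that is, pairs (maximal order, Frobenius element up to the relevant equivalence), are put in bijection with proper classes of primitive forms of discriminant $-16p$ or $-p$. The bijection is realized through exactly these Ibukiyama orders, with the form $[q,4r,4(r^2+p)/q]$ or $[q,r',((r')^2+p)/4q]$.

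First, I will compare Proposition~\ref{prop:perp-forms} with their formulas and check that the form attached to $(R_i,\pi_i)$ is the XZDQ form. Next, I will track the effect of replacing $\pi_i$ by $-\pi_i$, which amounts to passing to the quadratic twist. Conjugating by an element that anticommutes with $\alpha$ and normalizes the order (for instance $\beta$, up to scaling) sends $\alpha\mapsto-\alpha$. On the basis of $L_i^\perp$ this sends $r\mapsto -r$, so it reverses orientation and carries $C_i$ to $C_i^{-1}$. So the two twists in the geometric class $i$ correspond to $C_i$ and $C_i^{-1}$.

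Injectivity of $i\mapsto\{C_i,C_i^{-1}\}$ then follows from injectivity of the XZDQ bijection. If two geometric classes gave the same inverse orbit, their $\F_p$-twist pairs would share an $\F_p$-class and hence be equal. For $C_i\ne C_i^{-1}$, an ambiguous class would correspond to an $\F_p$-class isomorphic to its own twist. For $j\ne0,1728$ that would force $\operatorname{Aut}\supsetneq\{\pm1\}$, which is impossible.

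Finally, I will handle the genus statement. For discriminant $-16p$, the form $[q,4r,\ast]$ has $q\equiv3\pmod8$ and $\leg{p}{q}=-1$, so its genus characters, namely the character $\chi_{-4}$ together with the $p$-character or the character modulo $8$, differ from those of the principal form $[1,0,4p]$. It therefore lies in the nonprincipal genus. Surjectivity onto the nonexceptional classes of that genus, and onto the classes of discriminant $-p$ when $p\equiv3\pmod4$, comes from the bijectivity in XZDQ after removing the classes attached to $j=0,1728$.

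I expect the main obstacle to be normalization: checking that the XZDQ correspondence uses the same Frobenius marking and the same orientation convention as $L_i^\perp$. Here I will verify directly on their theorem statements that their form is the norm form on $\Z\beta+\Z(r+\alpha)\beta\cdot(2/q)$ up to the scalar.
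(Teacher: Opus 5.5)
\paragraph{Overall assessment.} Your overall route is the paper's. You transport $(R_i,\pi_i)$ to an Ibukiyama model via Proposition~\ref{prop:pair-model}, match the form of Proposition~\ref{prop:perp-forms} with the Xiao--Zhou--Deng--Qu form, and read injectivity and the branch description off their bijection. Your genus check is fine: already $\chi_{-4}(q)=-1$ separates $[q,4r,\ast]$ from $[1,0,4p]$. Your $\operatorname{Aut}=\{\pm1\}$ argument for $C_i\ne C_i^{-1}$ is also a fine substitute for citing their Remark~3.6, once the twist correspondence is in place.

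\paragraph{The gap.} The one step you prove by hand, twist $\leftrightarrow$ inverse, is wrong as written. Conjugation by $\beta$ fixes $\beta$ and negates $\alpha$ and $\alpha\beta$. So it carries $O(q,r)$ onto $O(q,-r)$ and $O'(q,r')$ onto $O'(q,-r')$. These are different orders: for instance, $O(q,r)=O(q,-r)$ would put $2r\beta/q$ in $O(q,r)$, which is impossible since $q\nmid 4r$. Hence $\beta$ does not normalize the order, and scaling $\beta$ does not change the conjugation.

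In fact no element can do what you require. Suppose $\gamma$ normalized $R_i$ and $\gamma\pi_i\gamma^{-1}=-\pi_i$. Then conjugation by $\gamma$ would be a rotation of the trace-zero space acting by $-1$ on $\Q\pi_i$. It would therefore be an orientation-reversing isometry of $L_i^\perp$ onto itself, that is, an improper automorphism of the form, forcing $C_i=C_i^{-1}$. That is the opposite of what you prove in the next paragraph. Had $\beta$ really preserved the order, ``$r\mapsto-r$'' would exhibit a self-equivalence of $Q_{q,r}$, not a passage to another class.

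\paragraph{How to repair it.} What distinguishes $C_i$ from $C_i^{-1}$ is not a different lattice, since $L_i^\perp$ is the same for both twists. It is the orientation of $(\Q\pi_i)^\perp$ fixed by the sign of Frobenius, for example by declaring $(x,\pi_ix)$ positively oriented. For $\pi_i=\alpha$, the basis $e_1=\beta$, $e_2=(2r\beta+2\alpha\beta)/q$ of Proposition~\ref{prop:perp-forms}(a) is positive, because $\alpha e_1=-re_1+\tfrac q2 e_2$.

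The correct form of your mechanism is this. Conjugation by $\beta$ is an isomorphism of \emph{marked} orders $(O(q,r),-\alpha)\cong(O(q,-r),\alpha)$, and likewise $(O'(q,r'),-\alpha)\cong(O'(q,-r'),\alpha)$. Moreover, $Q_{q,-r}=[q,-4r,4(r^2+p)/q]$ is the opposite of $Q_{q,r}$. This also absorbs the sign ambiguity $\pm\alpha$ in Proposition~\ref{prop:pair-model}.

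Even then you still need the normalization you flag at the end: that Xiao--Zhou--Deng--Qu attach $[q,4r,\ast]$ to the $\F_p$-model whose Frobenius goes to $+\alpha$. That is precisely the content of their Theorem~3.7, which the paper invokes instead of an internal argument.
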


\begin{proof}
By Proposition~\ref{prop:pair-model}, the Frobenius-marked pair
$(R_i,\pi_i)$ is identified with the corresponding Ibukiyama model, and
Proposition~\ref{prop:perp-forms} shows that the primitive form obtained
from $L_i^\perp$ is the form occurring in
\cite[Theorems~3.3 and~3.5]{XZDQ2025}. Hence their parametrization
associates this form to the original geometric supersingular class $i$.

By \cite[Theorem~3.7]{XZDQ2025}, replacing a form class by its inverse
corresponds to passing to the quadratic twist over $\F_p$. Hence the
inverse orbit $\{C_i,C_i^{-1}\}$ corresponds precisely to the two
$\F_p$-isomorphism classes with geometric class $i$, and distinct
nonexceptional geometric supersingular classes give distinct inverse
orbits. Moreover, \cite[Remark~3.6]{XZDQ2025} shows that the only
self-inverse classes arising here are the exceptional classes
corresponding to $j=1728$; thus $C_i\ne C_i^{-1}$ for nonexceptional
$i$. The description of the two branches follows from
\cite[Theorems~3.5 and~3.7]{XZDQ2025}.
\end{proof}

Let $S_p^{(16)}$ denote the nonexceptional geometric supersingular classes
arising from the discriminant $-16p$ branch, and let $S_p^{(p)}$ denote
those arising from the discriminant $-p$ branch. Thus
\[
  S_p\setminus\{0,1728\}
  =
  S_p^{(16)}\sqcup S_p^{(p)},
\]
where absent exceptional classes are ignored, and
$S_p^{(p)}=\varnothing$ unless $p\equiv3\pmod4$.

For a primitive positive-definite binary form $C=[a,b,c]$, write
\[
  r_C(n)
  =
  \#\{(x,y)\in\Z^2:ax^2+bxy+cy^2=n\}.
\]
Since
\[
  r_C(n)=r_{C^{-1}}(n),
\]
the representation number depends only on the inverse orbit
$\{C,C^{-1}\}$. We may therefore translate the parity formula of
Proposition~\ref{prop:four-orbit} directly into the form-class language.

\begin{corollary}[Representation-parity dictionary]
\label{cor:dictionary}
Let $-D$ be admissible for $p$, and let
$i\in S_p\setminus\{0,1728\}$ correspond to the inverse orbit
$\{C_i,C_i^{-1}\}$.
\begin{enumerate}[label=\textup{(\alph*)}]
\item If $i\in S_p^{(16)}$, then
\[
  m_i(D)\equiv\frac{r_{C_i}(D)}2\pmod2.
\]

\item If $i\in S_p^{(p)}$ and $D=4n$, then
\[
  m_i(D)\equiv\frac{r_{C_i}(n)}2\pmod2.
\]

\item If $i\in S_p^{(p)}$ and $D$ is odd, then
\[
  m_i(D)\equiv0\pmod2.
\]
\end{enumerate}
\end{corollary}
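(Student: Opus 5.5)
The plan is to combine Proposition~\ref{prop:four-orbit} with the explicit descriptions of $L_i^\perp$ in Proposition~\ref{prop:perp-forms}. First I check that the hypothesis $p\nmid D$ of Proposition~\ref{prop:four-orbit} holds: admissibility means $\leg{-D}{p}=-1$, so $p\nmid D$. Hence for every nonexceptional $i\in S_p$ we have
\[
  m_i(D)\equiv \frac{N_i^\perp(D)}{2}\pmod 2 .
\]
It remains to express $N_i^\perp(D)$ through the representation numbers of the primitive form $C_i$.

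By Proposition~\ref{prop:pair-model}, an inner automorphism carries $(R_i,\pi_i)$ to an Ibukiyama model with $\pi_i\mapsto\pm\alpha$. Such a map is an isometry for the reduced norm and preserves traces, so it carries $L_i$ onto the trace-zero part of $\Z+2O$ and carries $L_i^\perp$ onto the perpendicular lattice of $\alpha$ computed in Proposition~\ref{prop:perp-forms}. Thus $N_i^\perp(D)$ counts the pairs $(x,y)\in\Z^2$ at which the norm form, written in the displayed basis, takes the value $D$. An oriented basis change within the same proper class, or reversal of orientation, which replaces $C$ by $C^{-1}$, does not change these counts, since $r_C=r_{C^{-1}}$. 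The counts therefore depend only on $\{C_i,C_i^{-1}\}$.

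The three cases then follow directly.
\begin{enumerate}[label=\textup{(\alph*)}]
\item In the $O(q,r)$ branch the norm form is the primitive form $Q_{q,r}$ itself, of class $C_i$. Hence $N_i^\perp(D)=r_{C_i}(D)$.
\item In the $O'(q,r')$ branch the norm form is $4Q'_{q,r'}$. So a vector has norm $D=4n$ exactly when $Q'_{q,r'}(x,y)=n$, giving $N_i^\perp(4n)=r_{C_i}(n)$.
\item In the same branch every represented norm is divisible by $4$. If $D$ is odd, then $N_i^\perp(D)=0$ and so $m_i(D)\equiv 0$.
\end{enumerate}

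The only delicate point is the claim that the class $C_i$ defined in the text is really the class of the form obtained from this particular basis. This holds by definition, since $C_i$ was taken to be the proper class of the primitive form from $L_i^\perp$, together with the well-definedness up to inversion just noted.

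One loose end is that Proposition~\ref{prop:pair-model} assumes $p>5$. For $p=5$ the only supersingular $j$ is $0$, so no nonexceptional $i$ exists and the statement is vacuous.
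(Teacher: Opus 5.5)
Your proposal is correct and follows the paper's proof. You reduce via Proposition~\ref{prop:four-orbit} to counting norm-$D$ vectors in $L_i^\perp$, then read off the norm form in each branch from Proposition~\ref{prop:perp-forms}: $Q_{q,r}$ for part (a) and $4Q'_{q,r'}$ for parts (b) and (c), while also making explicit two points the paper leaves implicit, namely that admissibility forces $p\nmid D$ and that the inner automorphism of Proposition~\ref{prop:pair-model} carries $L_i^\perp$ isometrically onto the lattice perpendicular to $\alpha$.
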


\begin{proof}
Proposition~\ref{prop:four-orbit} reduces $m_i(D)$ modulo $2$ to half
the number of norm-$D$ vectors in $L_i^\perp$. In the $-16p$ branch,
Proposition~\ref{prop:perp-forms} identifies this norm form with a
representative of $C_i$, giving part~\textup{(a)}. In the $-p$ branch
the perpendicular norm form is $4Q'_{q,r'}$, which gives
part~\textup{(b)} and represents no odd integer, proving
part~\textup{(c)}.
\end{proof}

\subsection{The genus condition and admissibility}

We record the genus-theoretic fact needed later to verify admissibility in
the $-16p$ branch.

\begin{lemma}[Admissibility from the nonprincipal genus]
\label{lem:odd-genus}
Let $C$ lie in the nonprincipal genus of discriminant $-16p$. If $n$ is
squarefree, coprime to $2p$, and represented by $C$, then
\[
  n\equiv3\pmod4,
  \qquad
  \leg{-n}{p}=-1.
\]
In particular, $-n$ is admissible for $p$.
\end{lemma}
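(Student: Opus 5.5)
We prove the lemma with genus characters. For discriminant $-16p$ with $p$ odd, the assigned characters are the Legendre character $\chi_p(n)=\leg{n}{p}$ together with $\chi_{-4}(n)=(-1)^{(n-1)/2}$. The latter is one of the assigned characters because $-16p/4=-4p\equiv 4\cdot(\text{odd})$, so the $2$-part contributes $\chi_{-4}$ (and possibly $\chi_{\pm8}$; we check this below). The principal genus is the kernel of the total genus character. The nonprincipal genus is then characterized by the values of these characters on the integers prime to $2p$ that it represents.

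\textbf{Step 1: the genus characters.} For $D=-16p$ we have $D/4=-4p$, and $-4p\equiv 4\pmod{16}$ if $p\equiv3\pmod4$, or $-4p\equiv12\pmod{16}$ if $p\equiv1\pmod4$. In both cases Gauss's list of assigned characters for $n=-4p$ with $n/4\equiv 3$ or $1\pmod4$ is $\{\chi_p,\chi_{-4}\}$. So there are exactly two genera, and the product of the characters is the trivial function on represented values, namely $\chi_{-4}(m)\chi_p(m)=\leg{-4p}{m}$ by reciprocity. On the principal form $x^2+4py^2$ the odd represented values are $\equiv1\pmod4$ and are squares mod $p$. Hence the principal genus has $\chi_{-4}=\chi_p=+1$, and the nonprincipal genus has $\chi_{-4}=\chi_p=-1$ on every represented integer prime to $2p$.

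\textbf{Step 2: conclusion.} If $n$ is coprime to $2p$ and represented by $C$ in the nonprincipal genus, then $\chi_{-4}(n)=-1$, so $n\equiv3\pmod4$, and $\leg{n}{p}=-1$. Then
\[
  \leg{-n}{p}=\leg{-1}{p}\leg{n}{p}=-\leg{-1}{p},
\]
which is $-1$ only when $p\equiv1\pmod4$. This contradicts the claim when $p\equiv3\pmod4$, so the character bookkeeping has to be redone there. The correct invariant is the Kronecker character $\leg{-4p}{\cdot}$ versus $\chi_{-4}$. When $p\equiv3\pmod4$, the assigned characters should be $\chi_{-4}$ and $\leg{\cdot}{p}$, but the principal form $x^2+4py^2$ also represents values with $\leg{n}{p}=1$, and the nonprincipal genus has $\chi_{-4}=-1$. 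The cleaner route is to use the character $\leg{-n}{p}$ directly. Represented values $m$ of any form of discriminant $-16p$ satisfy $\leg{-16p}{\ell}\ne-1$ for the relevant primes, and a quadratic-reciprocity computation gives $\leg{-n}{p}=\leg{-p}{n}\cdot(\text{sign depending on } n,p \bmod 4)$.

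I will therefore redo Step 1 by computing $\leg{-n}{p}$ for $n$ represented by $x^2+4py^2$, where $n\equiv x^2\pmod p$ gives $\leg{-n}{p}=\leg{-1}{p}$. I will also compute it for an explicit nonprincipal representative such as $4x^2+4xy+(p+1)y^2$ divided appropriately, or better $[q,4r,4(r^2+p)/q]$ from Proposition~\ref{prop:perp-forms}(a). There $n$ is represented with $n\equiv qx^2\pmod p$ (reducing mod $p$ after completing the square, since $r^2\equiv -p\pmod q$ only matters mod $q$). The key computation is that for $Q_{q,r}$ we have $4q\,Q_{q,r}=(2qx+4ry)^2+16py^2$, so $\leg{n}{p}=\leg{q}{p}$ and $n\equiv q\cdot(\text{odd square})\equiv q\equiv3\pmod4$. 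Since $\leg{p}{q}=-1$ and $q\equiv3\pmod4$, reciprocity gives $\leg{q}{p}=-\leg{p}{q}(-1)^{(p-1)/2}=(-1)^{(p-1)/2}$. Hence
\[
  \leg{-n}{p}=\leg{-1}{p}\leg{q}{p}=\leg{-1}{p}^2\cdot\bigl(-\leg{p}{q}\bigr)\cdot\leg{-1}{p}^{-1}\cdots
\]
and the signs combine to give $-1$. Since genus characters are constant on a genus and every nonprincipal class shares the values of $Q_{q,r}$, this settles all $C$. The main obstacle is exactly this sign bookkeeping across $p\bmod4$. I would carry it out by the identity $4qQ=(2qx+4ry)^2+16py^2$, which gives $n\equiv q\pmod 4$ and $\leg{qn}{p}=1$ uniformly, then finish with reciprocity for $\leg{q}{p}$. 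Squarefreeness is only used to make $-n$ (for $n\equiv3\pmod4$) a fundamental discriminant.
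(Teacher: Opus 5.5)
\textbf{There is a genuine gap.} The decisive sign, which is the whole content of the lemma, is not established.

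\textbf{Step 1.} Your relation $\chi_{-4}(m)\chi_p(m)=\leg{-4p}{m}$ holds only for $p\equiv1\pmod4$. Represented values $m$ prime to $2p$ satisfy $\leg{-p}{m}=1$. For $p\equiv3\pmod4$ and odd $m$ prime to $p$, one has $\leg{-p}{m}=\leg{m}{p}$. So the constraint there is $\chi_p=+1$, and the nonprincipal genus has $(\chi_{-4},\chi_p)=(-1,+1)$, not $(-1,-1)$. This is the real source of the ``contradiction'' you found, not a failure of the lemma. With the correct values, $\leg{-n}{p}=\leg{-1}{p}=-1$ in that case.

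\textbf{The repair via $Q_{q,r}$.} The identity $4qQ_{q,r}=(2qx+4ry)^2+16py^2$ is correct. It does give $n\equiv3\pmod4$ and $\leg{n}{p}=\leg{q}{p}$. But your reciprocity step $\leg{q}{p}=-\leg{p}{q}(-1)^{(p-1)/2}=(-1)^{(p-1)/2}$ carries a spurious minus sign. With your value you would get $\leg{-n}{p}=\leg{-1}{p}^2=+1$, the opposite of the claim. For example, with $p=83$ and $q=3$ one has $\leg{3}{83}=+1$, whereas your formula gives $-1$. So ``the signs combine to give $-1$'' is asserted, not derived, and it contradicts your own displayed intermediate value.

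The correct law for $q\equiv3\pmod4$ is $\leg{q}{p}=\leg{p}{q}(-1)^{(p-1)/2}=-\leg{-1}{p}$. Hence $\leg{-n}{p}=\leg{-1}{p}\leg{q}{p}=-1$. With this correction your route works, provided you also record two facts:
\begin{itemize}
\item $Q_{q,r}$ is nonprincipal, since it represents $q\equiv3\pmod4$ while $x^2+4py^2$ represents only odd values $\equiv1\pmod4$.
\item $\chi_p$ is constant on a genus.
\end{itemize}

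\textbf{The paper's route.} The approach you sketched and then abandoned is exactly the paper's proof, and it needs no case split on $p\bmod4$ and no explicit representative. The genus character $\chi_{-4}$ gives $n\equiv3\pmod4$. Since $n$ is squarefree, the representation is primitive. Hence $-16p$ is a square modulo $n$, so $\leg{-p}{n}=1$ as a Jacobi symbol. Jacobi reciprocity with $n\equiv3\pmod4$ then gives $\leg{-n}{p}=\leg{p}{n}=-\leg{-p}{n}=-1$.
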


\begin{proof}
By genus theory, the nonprincipal genus of discriminant $-16p$ has genus
character
\[
  (-1)^{(n-1)/2}=-1
\]
on represented integers prime to $2p$; see
\cite[Theorem~3.15]{Cox2013}. Hence $n\equiv3\pmod4$.

Since $n$ is squarefree, its representation is primitive. By
\cite[Lemma~2.5]{Cox2013}, the discriminant $-16p$ is a quadratic residue
modulo $n$, and therefore
$\leg{-p}{n}=1$.
Quadratic reciprocity, together with $n\equiv3\pmod4$, gives
$\leg{-n}{p}=-1$.

Finally, $-n$ is a fundamental discriminant because $n$ is squarefree and
$n\equiv3\pmod4$.
\end{proof}

Corollary~\ref{cor:dictionary} translates the parity of the Gross
coefficients into representation parity by the corresponding binary form
classes, while Lemma~\ref{lem:odd-genus} ensures that the squarefree
integers represented in the discriminant $-16p$ branch give admissible
discriminants. The next section uses the form--ideal correspondence and
Chebotarev's theorem to control these representation parities.

\section{Form classes, ideals, and Chebotarev}\label{sec:classfield}

Corollary~\ref{cor:dictionary} expresses the parity of a Gross coefficient
in terms of representation by a binary form class. To control these
representations, we pass from form classes to the corresponding ideal classes
in quadratic orders, where class field theory and Chebotarev's theorem allow
us to choose split primes with prime ideals in specified ideal classes.
We collect here the standard facts needed for this purpose.

Let $\Delta<-4$ be a negative discriminant, put
$K=\Q(\sqrt{\Delta})$, and let
\[
  G_\Delta=\Pic(\OO_\Delta)
\]
be the proper ideal class group of the quadratic order $\OO_\Delta$ of
discriminant $\Delta$. Proper equivalence classes of primitive positive
definite binary quadratic forms of discriminant $\Delta$ are identified
with $G_\Delta$ via the usual form--ideal correspondence.
Let $H_\Delta$ be the ring class field of $\OO_\Delta$. Artin reciprocity
gives
\[
  \Gal(H_\Delta/K)\simeq G_\Delta,
\]
and $H_\Delta/\Q$ is generalized dihedral. If $\sigma_C$ denotes the
automorphism corresponding to $C\in G_\Delta$ and $c$ denotes complex
conjugation, then
\[
  c\sigma_Cc^{-1}=\sigma_C^{-1}.
\]
We also recall that the genera of discriminant $\Delta$ are the cosets of
$G_\Delta^2$ \cite[Theorem~3.15]{Cox2013}.

\begin{lemma}[Representations as ideal counts]
\label{lem:representations-ideals}
Let $C\in G_\Delta$ and $(n,\Delta)=1$. Then $r_C(n)/2$ is the number of
proper integral $\OO_\Delta$-ideals of norm $n$ in the ideal class
corresponding to $C$.
\end{lemma}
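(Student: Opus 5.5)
The plan is to prove Lemma~\ref{lem:representations-ideals} through the standard form--ideal correspondence, restricted to ideals prime to the conductor, together with an explicit count of how representations map to ideals. Since $\Delta<-4$, we have $\OO_\Delta^\times=\{\pm1\}$, and this is exactly where the factor $2$ comes from.

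The first step is to fix a representative form $f=ax^2+bxy+cy^2$ of $C$ and attach to it the proper ideal $\mathfrak a=[a,(-b+\sqrt\Delta)/2]$ in the class corresponding to $C$ (or to $C^{-1}$, depending on the orientation convention). Its norm satisfies $N(x a-y(-b+\sqrt\Delta)/2)=a\,f(x,y)$ up to the orientation sign on $b$. Hence $(x,y)\mapsto\lambda=xa-y(-b+\sqrt\Delta)/2\in\mathfrak a$ gives a bijection between pairs with $f(x,y)=n$ and elements $\lambda\in\mathfrak a$ with $N(\lambda)=n\,N(\mathfrak a)$.

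The second step sends $\lambda$ to the integral ideal $\mathfrak b=\lambda\bar{\mathfrak a}/N(\mathfrak a)$, or equivalently $\lambda\mathfrak a^{-1}$. This ideal has norm $n$ and lies in the class of $\mathfrak a^{-1}$, which is the class attached to $C$ under the appropriate convention. Here I use that $\mathfrak a$ is invertible, so $\mathfrak a\bar{\mathfrak a}=N(\mathfrak a)\OO_\Delta$, and that the hypothesis $(n,\Delta)=1$ makes every integral ideal of norm $n$ prime to the conductor, hence proper and invertible. Conversely, every proper ideal $\mathfrak b$ of norm $n$ in that class satisfies $\mathfrak b\mathfrak a=\lambda\OO_\Delta$ for some $\lambda\in\mathfrak a$ with $N(\lambda)=nN(\mathfrak a)$. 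Two elements $\lambda$ give the same ideal exactly when they differ by a unit, and there are precisely two units $\pm1$. So the map is $2$-to-$1$ onto the ideals of norm $n$ in the class, which gives $r_C(n)/2$.

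The main obstacle is bookkeeping rather than substance. One must make sure the class of $\lambda\mathfrak a^{-1}$ is the class of $C$ itself and not $C^{-1}$; since $r_C=r_{C^{-1}}$, either identification yields the stated count, which I would note explicitly. One must also invoke \cite[Theorem~7.7 and Proposition~7.20]{Cox2013} for properness and multiplicativity of norms of ideals prime to the conductor. That coprimality is needed precisely so that ideals of norm $n$ are invertible and norms multiply.
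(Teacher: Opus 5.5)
Your proposal is correct and is essentially the paper's argument. The paper cites the classical form--ideal correspondence (Cox, Theorem~7.7) and notes that $(x,y)$ and $(-x,-y)$ give the same ideal because the only proper automorphisms for $\Delta<-4$ are $\pm I$; this is exactly your $2$-to-$1$ map $\lambda\mapsto\lambda\mathfrak a^{-1}$ with fibres $\{\pm\lambda\}$ coming from $\OO_\Delta^\times=\{\pm1\}$, which you write out in detail (one minor overstatement: since only proper, hence invertible, ideals are counted and norms are multiplicative on those, $(n,\Delta)=1$ is convenient but not strictly needed in your argument).
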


\begin{proof}
This is the classical form--ideal correspondence; see
\cite[Theorem~7.7 and Chapter~9]{Cox2013}. The two representations
$(x,y)$ and $(-x,-y)$ determine the same ideal, and for $\Delta<-4$ the
only proper automorphisms of a primitive positive-definite form are
$\pm I$.
\end{proof}

For a prime norm the preceding lemma is especially rigid: if
$\ell\nmid\Delta$ splits in $K$, there are exactly two proper ideals of
norm $\ell$, and their classes are inverse to one another. Chebotarev
therefore allows us to choose $\ell$ so that these two ideals represent
any prescribed inverse orbit.

\begin{lemma}[A prime in a prescribed inverse orbit]\label{lem:prime-isolation}
Let $C\in G_\Delta$ and let $\Sigma$ be a finite set of rational primes.  There are infinitely many primes $\ell\notin\Sigma$, unramified in $H_\Delta$, such that $\ell$ splits in $K$ and the two prime ideals above $\ell$ have classes $C$ and $C^{-1}$.  No form class outside $\{C,C^{-1}\}$ represents $\ell$.  Moreover,
\[
  \frac{r_C(\ell)}2=
  \begin{cases}
    1,&C\ne C^{-1},\\
    2,&C=C^{-1}.
  \end{cases}
\]
\end{lemma}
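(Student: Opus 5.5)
The plan is to apply Chebotarev's density theorem to the Galois extension $H_\Delta/\Q$ and then read off the representation counts from Lemma~\ref{lem:representations-ideals}.

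\emph{Choosing the Frobenius class.} Let $\sigma_C\in\Gal(H_\Delta/K)\subset\Gal(H_\Delta/\Q)$ correspond to $C$. Its conjugacy class in $\Gal(H_\Delta/\Q)$ is contained in $\{\sigma_C^{\tau}:\tau\}$. Conjugation by elements of $\Gal(H_\Delta/K)$ is trivial because that group is abelian, and conjugation by $c$ gives $\sigma_C^{-1}$. So the conjugacy class is $\{\sigma_C,\sigma_C^{-1}\}$.

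Chebotarev gives infinitely many primes $\ell$, unramified in $H_\Delta$, whose Frobenius class is this conjugacy class. We discard the finitely many primes in $\Sigma$ and those dividing $\Delta$. Since the class lies in $\Gal(H_\Delta/K)$, $\ell$ splits in $K$, say $\ell\OO_K=\lambda\bar\lambda$. Because $\ell\nmid\Delta$, hence $\ell$ is prime to the conductor, the ideals $\lambda\cap\OO_\Delta$ and $\bar\lambda\cap\OO_\Delta$ are proper invertible ideals of norm $\ell$. Under Artin reciprocity, the Frobenius of $\lambda$ in $H_\Delta/K$ is $\sigma_{[\lambda]}$. Choosing $\lambda$ appropriately we get $[\lambda]=C$, and then $[\bar\lambda]=[\lambda]^{-1}=C^{-1}$, since $\lambda\bar\lambda=(\ell)$ is principal.

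\emph{Counting representations.} For the other assertions we use Lemma~\ref{lem:representations-ideals} with $n=\ell$, noting $(\ell,\Delta)=1$. The proper ideals of norm $\ell$ are exactly the two ideals $\lambda\cap\OO_\Delta$ and $\bar\lambda\cap\OO_\Delta$. These are distinct because $\ell$ splits. Any ideal of norm $\ell$ contains $\ell$ and is prime, so it lies over one of $\lambda,\bar\lambda$.

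It follows that a class $C'$ represents $\ell$ only if $C'\in\{C,C^{-1}\}$. Moreover $r_C(\ell)/2$ equals the number of these two ideals lying in class $C$. That number is $1$ if $C\ne C^{-1}$ and $2$ if $C=C^{-1}$.

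\emph{Main obstacle.} The only delicate point is identifying the $\Gal(H_\Delta/\Q)$-conjugacy class of $\sigma_C$ via the generalized dihedral structure, so that Chebotarev yields primes with Frobenius in $\Gal(H_\Delta/K)$ equal to $\sigma_C$ or $\sigma_C^{-1}$. The remaining step, translating this into ideal classes, is the standard compatibility of Artin reciprocity for ring class fields with primes prime to the conductor (Cox, Chapter~9).
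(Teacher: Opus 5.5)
Your proposal is correct and follows the paper's proof essentially step for step. You identify the $\Gal(H_\Delta/\Q)$-conjugacy class of $\sigma_C$ as $\{\sigma_C,\sigma_C^{-1}\}$, apply Chebotarev, read off splitting in $K$ and the classes $C,C^{-1}$ of the two primes via Artin reciprocity, and then count with Lemma~\ref{lem:representations-ideals}; your explicit exclusion of primes dividing $\Delta$ is a worthwhile addition, since that lemma requires $(\ell,\Delta)=1$ and the paper's proof leaves this implicit.
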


\begin{proof}
Via the Artin isomorphism, let $\sigma_C\in\Gal(H_\Delta/K)$ correspond to $C$.  Its conjugacy class in $\Gal(H_\Delta/\Q)$ is $\{\sigma_C,\sigma_C^{-1}\}$.  Chebotarev, with the primes in $\Sigma$ excluded, gives infinitely many rational primes with this Frobenius conjugacy class.  Its image in $\Gal(K/\Q)$ is trivial, so such a prime $\ell$ splits in $K$.  After choosing one prime $\mathfrak l$ above $\ell$, the two primes $\mathfrak l$ and $\overline{\mathfrak l}$ have classes $C$ and $C^{-1}$.  They are the only ideals of norm $\ell$, so the last assertions follow from Lemma~\ref{lem:representations-ideals}.
\end{proof}

For the discriminant $-p$ branch, which occurs only when
$p\equiv3\pmod4$, a refined Chebotarev argument produces a prime
$q\equiv1\pmod4$ that splits in $\Q(\sqrt{-p})$ and whose prime ideals
above $q$ represent $C_i$ and $C_i^{-1}$.

\begin{lemma}[Adding the condition $q\equiv1\pmod4$]\label{lem:class-mod-four}
Assume $p\equiv3\pmod4$, put $K=\Q(\sqrt{-p})$, and let $H$ be its Hilbert class field. For every $x\in\Pic(\OO_K)$ and every finite set $\Sigma$, there are infinitely many primes $q\notin\Sigma$ with $q\equiv1\pmod4$ such that one of the primes of $K$ above $q$ has class $x$.
\end{lemma}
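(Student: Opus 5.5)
We will realize the two conditions at once as a single Frobenius condition in the compositum $M=H(i)=H\cdot\Q(\sqrt{-1})$ and then apply Chebotarev in $M/\Q$. Since $p\equiv3\pmod4$, the discriminant of $K$ is $-p$. Every finite prime ramified in $H/\Q$ divides $p$, because $H/K$ is unramified. The prime $2$ is ramified in $\Q(\sqrt{-1})$ but unramified in $H$. Hence $\Q(\sqrt{-1})\not\subset H$, so $H\cap\Q(\sqrt{-1})=\Q$. This gives
\[
  \Gal(M/\Q)\simeq\Gal(H/\Q)\times\Gal(\Q(\sqrt{-1})/\Q).
\]

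Let $\sigma_x\in\Gal(H/K)$ correspond to $x$ under the Artin map. We choose the element
\[
  \tau=(\sigma_x,1)\in\Gal(M/\Q).
\]
Its conjugacy class is $\{(\sigma_x,1),(\sigma_x^{-1},1)\}$, because conjugation in $\Gal(H/\Q)$ sends $\sigma_x$ to $\sigma_x^{\pm1}$ (as recalled before Lemma~\ref{lem:representations-ideals}) and the second factor is abelian. By Chebotarev there are infinitely many primes $q$, unramified in $M$ and not in $\Sigma\cup\{2,p\}$, whose Frobenius class in $M$ is this conjugacy class.

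We then read off the two conditions from the two projections.
\begin{enumerate}[label=\textup{(\roman*)}]
\item The projection to $\Gal(\Q(\sqrt{-1})/\Q)$ is trivial, so $q$ splits in $\Q(\sqrt{-1})$, that is, $q\equiv1\pmod4$.
\item The projection to $\Gal(H/\Q)$ lies in $\Gal(H/K)$, so $q$ splits in $K$, say $q\OO_K=\mathfrak q\overline{\mathfrak q}$.
\item The Frobenius of $\mathfrak q$ in $H/K$ equals the Frobenius of $q$ in $H/\Q$, since $\mathfrak q$ has residue degree one over $q$. This element is $\sigma_x^{\pm1}$, so by Artin reciprocity $\mathfrak q$ or $\overline{\mathfrak q}$ has class $x$.
\end{enumerate}

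The step needing the most care is the linear disjointness $H\cap\Q(\sqrt{-1})=\Q$. Without it, the congruence condition could be correlated with the class condition. For instance, if $\Q(\sqrt{-1})$ lay inside the genus field, some classes might only be attained by primes $q\equiv3\pmod4$. Here this is settled by the ramification argument at $2$ above. Equivalently, the genus field of $K$ is $K$ itself, because the discriminant $-p$ is prime.

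Finally, for $p=3$ this is vacuous (the class number is one and the lemma is just Dirichlet), and in any case $p>3$ throughout.
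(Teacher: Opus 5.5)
Your proof is correct and is essentially the paper's argument: Chebotarev in the compositum $H(i)$, with linear disjointness coming from the fact that $2$ ramifies in $\Q(i)$ but not in $H$, and the Frobenius class chosen to be $\sigma_x^{\pm1}$ on $H$ and trivial on $\Q(i)$. The only cosmetic difference is that you prove disjointness over $\Q$ and decompose $\Gal(H(i)/\Q)$, whereas the paper proves $H\cap K(i)=K$ and decomposes $\Gal(HK(i)/K)$; both give the conjugacy class $\{(\sigma_x,1),(\sigma_x^{-1},1)\}$.
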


\begin{proof}
The extension $H/K$ is unramified at every finite prime. Since $K\ne\Q(i)$ and the fundamental discriminant of $K$ is odd, the nontrivial quadratic extension $K(i)/K$ is ramified above $2$. If $H\cap K(i)$ were larger than $K$, then, because $K(i)/K$ is quadratic, one would have $K(i)\subseteq H$, contradicting the finite-prime unramifiedness of $H/K$. Hence
\[
  H\cap K(i)=K,
  \qquad
  \Gal(HK(i)/K)\simeq
  \Gal(H/K)\times\Gal(K(i)/K).
\]
Choose an element of this product whose restriction to $H$ corresponds under Artin reciprocity to $x$ and whose restriction to $K(i)$ is trivial. Its conjugacy class over $\Q$ consists of this element and the element with first component $x^{-1}$. Chebotarev gives infinitely many rational primes outside $\Sigma$ with this conjugacy class. Their image in $\Gal(K/\Q)$ is trivial, so they split in $K$; their Frobenius on $\Q(i)$ is also trivial, so $q\equiv1\pmod4$. Choosing one of the two primes above $q$, and conjugating it if necessary, gives ideal class $x$.
\end{proof}

Products of two split primes give a second, equally simple pattern.  Choose prime ideals $\mathfrak q_1\mid q_1$ and $\mathfrak q_2\mid q_2$ with classes $x$ and $y$.  The four choices of a prime above each $q_j$ give all ideals of norm $q_1q_2$.

\begin{lemma}[Ideals of semiprime norm]\label{lem:four-ideals}
Let $q_1\ne q_2$ be rational primes prime to $\Delta$ that split in $K=\Q(\sqrt\Delta)$, and choose proper invertible prime ideals above them with classes $x,y\in G_\Delta$. The four proper invertible ideals of norm $q_1q_2$ have classes
\[
  xy,\qquad xy^{-1},\qquad x^{-1}y,\qquad x^{-1}y^{-1}.
\]
If the inverse orbits of $xy$ and $xy^{-1}$ are distinct and non-self-inverse, each has odd representation parity and every other inverse orbit has even representation parity. If $xy$ is non-self-inverse and $xy^{-1}$ is self-inverse, only the inverse orbit of $xy$ has odd representation parity.
\end{lemma}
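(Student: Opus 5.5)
The plan is to work entirely on the ideal side, via Lemma~\ref{lem:representations-ideals}, and translate back to representation parities at the end.

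\emph{Step 1: listing the ideals of norm $q_1q_2$.} Since $q_1,q_2$ are prime to $\Delta$ and split in $K$, the proper invertible ideals of $\OO_\Delta$ prime to the conductor factor uniquely into prime ideals. An integral ideal of norm $q_1q_2$ is therefore a product $\mathfrak a\mathfrak b$ with $\mathfrak a\in\{\mathfrak q_1,\overline{\mathfrak q}_1\}$ and $\mathfrak b\in\{\mathfrak q_2,\overline{\mathfrak q}_2\}$. No ideal divisible by $q_j$ itself can occur, because its norm would be divisible by $q_j^2$. These four products are distinct ideals. Since $[\overline{\mathfrak q}]=[\mathfrak q]^{-1}$, their classes are $xy$, $xy^{-1}$, $x^{-1}y$ and $x^{-1}y^{-1}$. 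This proves the first assertion.

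\emph{Step 2: counting ideals per inverse orbit.} For a class $C$, Lemma~\ref{lem:representations-ideals} (applicable since $(q_1q_2,\Delta)=1$) says that $r_C(q_1q_2)/2$ is the number of the four listed ideals lying in class $C$. The representation parity of an inverse orbit $\{C,C^{-1}\}$ is the parity of $r_C(q_1q_2)/2$, which is well defined because $r_C=r_{C^{-1}}$. The four classes come in two inverse pairs, $\{xy,x^{-1}y^{-1}\}$ and $\{xy^{-1},x^{-1}y\}$.

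Suppose first that $xy\ne(xy)^{-1}$, that $xy^{-1}$ is not self-inverse, and that the two orbits are distinct. Each of the classes $xy$ and $xy^{-1}$ then appears exactly once in the list: a coincidence such as $xy=x^{-1}y$ would force the orbits to meet. So each orbit has count $1$, which is odd, and every other class has count $0$.

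Suppose instead that $xy^{-1}$ is self-inverse, so $xy^{-1}=x^{-1}y$. Then the class $xy^{-1}$ occurs twice and its orbit has even count. The class $xy$ is non-self-inverse and differs from $xy^{-1}$ (otherwise it would be self-inverse), so it occurs exactly once and is odd. All remaining classes again have count $0$.

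\emph{Main obstacle.} The only delicate point is Step~1: making sure the four products exhaust the ideals of norm $q_1q_2$ and are pairwise distinct. This rests on unique factorization for ideals prime to the conductor, together with the observation that $\mathfrak q_j\ne\overline{\mathfrak q}_j$ for split $q_j$. Everything after that is bookkeeping of multiplicities.
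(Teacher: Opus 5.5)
Your proof is correct and follows the same route as the paper: list the four ideals $\mathfrak a\mathfrak b$ with $\mathfrak a\in\{\mathfrak q_1,\overline{\mathfrak q}_1\}$ and $\mathfrak b\in\{\mathfrak q_2,\overline{\mathfrak q}_2\}$, read off their classes, and use Lemma~\ref{lem:representations-ideals} to turn class multiplicities into representation parities. You spell out the unique factorization for ideals prime to the conductor, the distinctness of the four products, and the case-by-case multiplicity count, all of which the paper leaves implicit.
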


\begin{proof}
Write $q_j\OO_\Delta=\mathfrak q_j\overline{\mathfrak q}_j$ with $[\mathfrak q_1]=x$ and $[\mathfrak q_2]=y$. Since the primes are distinct and prime to $\Delta$, the four proper invertible ideals of norm $q_1q_2$ are obtained by choosing independently $\mathfrak q_j$ or $\overline{\mathfrak q}_j$; their classes are the four displayed products. Lemma~\ref{lem:representations-ideals} says that $r_C(q_1q_2)/2$ counts how many of these ideals lie in the class $C$. A non-self-inverse class occurring once therefore contributes odd parity, whereas a self-inverse class occurring twice contributes even parity.
\end{proof}

The genus condition needed for prescribed semiprime support is exactly what allows two inverse orbits to be realized by such a product.

\begin{lemma}[Two classes in the same genus]\label{lem:pair-realization}
If $A,B\in G_\Delta$ lie in the same genus, there are $x,y\in G_\Delta$ such that
\[
  xy=A,
  \qquad
  xy^{-1}=B.
\]
Moreover, $x$ and $y$ may be realized by chosen prime ideals above two distinct rational primes, avoiding any prescribed finite set.
\end{lemma}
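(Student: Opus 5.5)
Since $A$ and $B$ lie in the same genus and the genera are the cosets of $G_\Delta^2$ \cite[Theorem~3.15]{Cox2013}, we have $AB^{-1}\in G_\Delta^2$. Choose $y\in G_\Delta$ with $y^2=AB^{-1}$, and put $x=Ay^{-1}$. Then $xy=A$, and
\[
  xy^{-1}=Ay^{-2}=A\cdot BA^{-1}=B,
\]
using that $G_\Delta$ is abelian. This settles the algebraic part. There is no further choice to make here, except that any square root $y$ of $AB^{-1}$ will do.

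For the realization statement, I would apply Lemma~\ref{lem:prime-isolation} twice. First apply it with the class $x$ and the given finite set $\Sigma$, enlarged by the primes dividing $\Delta$. This produces a prime $q_1\notin\Sigma$ with $q_1\nmid\Delta$, split in $K$ and unramified in $H_\Delta$, whose two prime ideals above it have classes $x$ and $x^{-1}$. Let $\mathfrak q_1$ be the one of class $x$. Then apply the same lemma with the class $y$ and the set $\Sigma\cup\{q_1\}\cup\{\ell:\ell\mid\Delta\}$. This gives a prime $q_2\ne q_1$ with a prime ideal $\mathfrak q_2$ of class $y$.

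Since $q_j\nmid\Delta$, these prime ideals are proper and invertible in $\OO_\Delta$. They are therefore exactly the input required by Lemma~\ref{lem:four-ideals}. The prime $q_1$ itself is automatically outside the finite set used for $q_2$, so the primes are distinct.

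The only step needing care is the existence of the square root $y$. This is precisely where the same-genus hypothesis enters, through the identification of the principal genus with $G_\Delta^2$. That identification holds for arbitrary (not necessarily fundamental) discriminants $\Delta$ in the proper form-class group, which covers $\Delta=-16p$ as used here. Everything else is immediate from Chebotarev in the form of Lemma~\ref{lem:prime-isolation}.
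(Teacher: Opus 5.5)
Your proof is correct and is essentially the paper's argument. The paper takes a square root $x$ of $AB=(AB^{-1})B^2$ and sets $y=Ax^{-1}$; you take a square root $y$ of $AB^{-1}$ and set $x=Ay^{-1}$, which is the same idea up to relabeling. Your realization step, applying Lemma~\ref{lem:prime-isolation} twice with an enlarged avoidance set, matches the paper. Your extra exclusion of the primes dividing $\Delta$ is harmless and makes properness of the chosen prime ideals explicit.
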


\begin{proof}
Since $A$ and $B$ lie in the same genus, $AB^{-1}\in G_\Delta^2$.  Hence
\[
  AB=(AB^{-1})B^2
\]
is also a square.  Choose $x\in G_\Delta$ with $x^2=AB$ and put $y=Ax^{-1}$.  Then $xy=A$ and $xy^{-1}=B$.  Lemma~\ref{lem:prime-isolation}, applied twice and with an enlarged avoidance set after the first choice, realizes $x$ and $y$ by prime ideals above distinct rational primes.  If necessary, replace a chosen prime ideal by its conjugate to reverse the orientation.
\end{proof}

\section{Proof of the spanning theorem: isolating coordinates with primes}\label{sec:spanning-proof}

We prove Theorem~\ref{thm:spanning} by constructing, for each
$i\in S_p$, a Gross parity row equal to $e_i$ modulo coordinates already
known to lie in $\Rp$. We first treat the discriminant $-16p$ branch using
admissible discriminants $-\ell$ with $\ell$ prime; since $\ell$ is odd,
the discriminant $-p$ branch contributes zero. We then treat the
discriminant $-p$ branch using admissible discriminants of the form $-4q$.

Let
\[
W_{\mathrm{exc}}
=
\operatorname{Span}_{\mathbb F_2}
\{e_i : i\in \{0,1728\}\cap S_p\}.
\]
By Lemma~\ref{lem:exceptional-rows}, $W_{\mathrm{exc}}\subseteq\Rp$.  For $p=5,7,11$ one has respectively $S_5=\{0\}$, $S_7=\{1728\}$, and $S_{11}=\{0,1728\}$, so all rational supersingular classes are exceptional and there is nothing more to prove.  Assume henceforth that $p\ge13$.

We begin with $S_p^{(16)}$.  Fix $i\in S_p^{(16)}$ and choose a representative $C_i$ of its inverse orbit.  Proposition~\ref{prop:xzdq-compatibility} shows that $C_i$ is not self-inverse.  By Lemma~\ref{lem:prime-isolation}, we may choose a prime $\ell$, avoiding the primes dividing $6p$, such that the only form classes of discriminant $-16p$ representing $\ell$ are $C_i$ and $C_i^{-1}$.  Since these classes lie in the relevant nonprincipal genus, Lemma~\ref{lem:odd-genus} gives $\ell\equiv3\pmod4$ and $\leg{-\ell}{p}=-1$; hence $-\ell$ is admissible.

Corollary~\ref{cor:dictionary} now shows that the restriction of $\crow(\ell)$ to $S_p^{(16)}$ is $e_i$.  Its restriction to $S_p^{(p)}$ is zero because $\ell$ is odd.  Thus
\[
  \crow(\ell)\equiv e_i\pmod{W_{\mathrm{exc}}}.
\]
Both $\crow(\ell)$ and $W_{\mathrm{exc}}$ lie in $\Rp$, so $e_i\in\Rp$.  Varying $i$ gives
\[
  W_{16}:=W_{\mathrm{exc}}+
  \Span_{\F_2}\{e_i:i\in S_p^{(16)}\}
  \subseteq\Rp.
\]

It remains to treat $S_p^{(p)}$, which occurs only for $p\equiv3\pmod4$.  Fix $i\in S_p^{(p)}$ and choose a representative $C_i\in\Pic(\OO_{-p})$ of its inverse orbit.  By Proposition~\ref{prop:xzdq-compatibility}, $C_i$ is not self-inverse.  Lemma~\ref{lem:class-mod-four} gives a prime $q\equiv1\pmod4$, with $q\nmid2p$, such that a prime of $\Q(\sqrt{-p})$ above $q$ has class $C_i$; the conjugate prime has class $C_i^{-1}$.  Since these are the only ideals of norm $q$, Lemma~\ref{lem:representations-ideals} shows that $C_i^{\pm1}$ is the unique inverse orbit with odd representation parity at $q$.

The discriminant $-4q$ is fundamental. Moreover $q$ splits in $\Q(\sqrt{-p})$, so $\leg{-p}{q}=1$. Since $q\equiv1\pmod4$, this gives $\leg{q}{p}=1$ by quadratic reciprocity; because $p\equiv3\pmod4$, it follows that $\leg{-4q}{p}=-1$.  Hence $-4q$ is admissible.  By Corollary~\ref{cor:dictionary}, the restriction of $\crow(4q)$ to $S_p^{(p)}$ is $e_i$.  The row may also have entries on the already treated $-16p$ branch, but all of those coordinates lie in $W_{16}$.  Therefore
\[
  \crow(4q)\equiv e_i\pmod{W_{16}},
\]
and $e_i\in\Rp$.  Varying $i$ gives every remaining standard basis vector.  Consequently
\[
  \Rp=\F_2^{S_p},
\]
which proves Theorem~\ref{thm:spanning}.

\subsection{Semiprime rows}

Although products of two primes are not needed for the spanning theorem,
they reveal additional structure in the family of Gross rows: the same
class-group argument gives precise control over pairwise supports. We record
this stronger support phenomenon here.

\begin{proposition}[Semiprime support]
\label{prop:semiprime-support}
Let $i,j\in S_p^{(16)}$ be distinct. Then there are distinct odd primes
$q_1,q_2$ such that $-q_1q_2$ is admissible and
\[
  \crow(q_1q_2)\equiv e_i+e_j\pmod{W_{\mathrm{exc}}}.
\]
Thus every pair of nonexceptional coordinates in the discriminant $-16p$
branch occurs as the support, modulo the exceptional coordinates, of a
single admissible semiprime Gross row.
\end{proposition}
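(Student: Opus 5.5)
We work in $G_{-16p}$. Let $A=C_i$ and $B=C_j$ be chosen representatives of the two inverse orbits. By Proposition~\ref{prop:xzdq-compatibility}, both lie in the nonprincipal genus, hence in the same genus, and neither is self-inverse; also $\{A,A^{-1}\}\ne\{B,B^{-1}\}$. Lemma~\ref{lem:pair-realization} then gives classes $x,y$ with $xy=A$ and $xy^{-1}=B$, realized by prime ideals $\mathfrak q_1\mid q_1$, $\mathfrak q_2\mid q_2$ above distinct rational primes avoiding $6p$ and unramified in $H_{-16p}$. By Lemma~\ref{lem:four-ideals}, the four ideals of norm $q_1q_2$ have classes $A,B,B^{-1},A^{-1}$. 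The orbits of $A$ and $B$ are distinct and non-self-inverse, so exactly these two inverse orbits have odd representation parity at $q_1q_2$, and every other class has $r_C(q_1q_2)/2$ even.

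Next, admissibility. The integer $n=q_1q_2$ is squarefree, coprime to $2p$, and represented by $A$, which lies in the nonprincipal genus. Lemma~\ref{lem:odd-genus} therefore gives $n\equiv3\pmod4$ and $\leg{-n}{p}=-1$, so $-q_1q_2$ is an admissible fundamental discriminant. The primes $q_1,q_2$ are odd and distinct by construction.

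Now the parity row itself. For $k\in S_p^{(16)}$, Corollary~\ref{cor:dictionary}(a) gives $m_k(n)\equiv r_{C_k}(n)/2\pmod2$. Because the map $k\mapsto\{C_k,C_k^{-1}\}$ is injective (Proposition~\ref{prop:xzdq-compatibility}), the restriction of $\crow(q_1q_2)$ to $S_p^{(16)}$ is $e_i+e_j$. Since $n$ is odd, Corollary~\ref{cor:dictionary}(c) makes the restriction to $S_p^{(p)}$ vanish. The remaining coordinates are exceptional, which gives the congruence modulo $W_{\mathrm{exc}}$.

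The main point to check carefully is the hypothesis of Lemma~\ref{lem:four-ideals}: we need the orbits of $xy$ and $xy^{-1}$ to be distinct, which is precisely $i\ne j$, and non-self-inverse. We also need the ideals to be proper and invertible and $q_1,q_2$ to split. Both follow from coprimality to $2p$ and from the Chebotarev choice in Lemma~\ref{lem:prime-isolation}. If $x$ happens to be the identity, so that $q_1$ is represented by the principal form, that causes no problem, since only the classes of the norm-$n$ ideals matter.
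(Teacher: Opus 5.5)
Your proposal is correct and follows essentially the same route as the paper: Lemma~\ref{lem:pair-realization} in the common nonprincipal genus gives $xy=C_i$ and $xy^{-1}=C_j$; Lemma~\ref{lem:four-ideals} together with Corollary~\ref{cor:dictionary} yields the parity row; Lemma~\ref{lem:odd-genus} gives admissibility; and oddness of $q_1q_2$ kills the $-p$ branch. Your closing remark about $x$ being trivial is moot, since $x=1$ would force $C_j=C_i^{-1}$, contradicting $i\ne j$.
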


\begin{proof}
Choose representatives $C_i,C_j$ of the inverse orbits attached to
$i$ and $j$. Since they lie in the same nonprincipal genus of
discriminant $-16p$, Lemma~\ref{lem:pair-realization} gives classes
$x,y$ and distinct split primes $q_1,q_2$, coprime to $2p$, such that
\[
  xy=C_i,
  \qquad
  xy^{-1}=C_j,
\]
and chosen prime ideals above $q_1,q_2$ have classes $x,y$.  If
$n=q_1q_2$, the four ideals of norm $n$ have classes
\[
  C_i,\qquad C_j,\qquad C_j^{-1},\qquad C_i^{-1}.
\]
The two inverse orbits are distinct and non-self-inverse, so
Lemma~\ref{lem:four-ideals} and Corollary~\ref{cor:dictionary} give
\[
  \crow(n)\equiv e_i+e_j\pmod{W_{\mathrm{exc}}}.
\]
Moreover, $n$ is represented by the nonprincipal genus, and
Lemma~\ref{lem:odd-genus} shows that $-n$ is admissible. Since $n$ is
odd, the discriminant $-p$ branch contributes nothing.
\end{proof}

\section{Application to Watkins' conjecture}\label{sec:watkins}

Let $E/\Q$ be an elliptic curve of conductor $N$, and let $m_E$ denote
its modular degree, namely the minimal degree of a modular parametrization
\[
  X_0(N)\longrightarrow E.
\]
Watkins conjectured that
\[
  2^{\operatorname{rank}E(\Q)}\mid m_E
\]
\cite[Conjecture~4.1]{Watkins2002}.

The following argument is the prime-conductor argument of
\cite[Theorem~1.3]{KazalickiKohen2018}, with
Corollary~\ref{cor:parity} supplying the required parity of the Brandt
coefficients. We state explicitly the root-number $+1$ hypothesis used in
that argument.

\begin{theorem}\label{thm:modular-degree}
Let $E/\Q$ be an elliptic curve of prime conductor $p>3$, positive
Mordell--Weil rank, and root number $+1$. Then
\[
  4\mid m_E.
\]
In particular, Watkins' conjecture holds for such curves of rank $2$.
\end{theorem}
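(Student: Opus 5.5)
We follow the prime-conductor argument of \cite[Theorem~1.3]{KazalickiKohen2018}, with Corollary~\ref{cor:parity} supplying the parity of the Brandt coefficients. The starting point is Mestre's formula \cite[Section~3, formula~(5), Theorem~3.2]{Mestre1986}, which expresses the modular degree through the Brandt eigenvector. For prime conductor, with $v_E=\sum_i c_ie_i$ primitive, it gives
\[
  m_E=\langle v_E,v_E\rangle=\sum_{i\in S}w_ic_i^2,
\]
possibly up to a factor given by the Manin constant and the index of the image, which equals $1$ for optimal curves of prime conductor. The plan is to show that this sum is divisible by $4$.

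First we split the sum into the Frobenius-fixed classes $i\in S_p$ and the Frobenius pairs $\{i,\bar i\}$ with $i\notin S_p$. By Corollary~\ref{cor:parity}, every $c_i$ with $i\in S_p$ is even, so each term $w_ic_i^2$ with $i\in S_p$ is divisible by $4$. By Lemma~\ref{lem:exceptional-rows}, the exceptional coefficients $c_0$ and $c_{1728}$ actually vanish, which removes any worry about $w_i\in\{2,3\}$. Next, root number $+1$ and Lemma~\ref{lem:frob-sign} give $c_{\bar i}=c_i$. The nonexceptional classes have $w_i=w_{\bar i}=1$. Each pair therefore contributes $2c_i^2$, and the pair part of the sum equals
\[
  2\sum_{\{i,\bar i\},\,i\notin S_p}c_i^2 .
\]
It remains to show that $\sum_{\text{pairs}}c_i^2$ is even, or equivalently that $\sum_{\text{pairs}}c_i\equiv0\pmod2$.

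This is where the Gross--Kudla cubic identity \cite[Corollary~11.5]{GrossKudla1992} enters. For positive rank with root number $+1$ we have $L(E,1)=0$ (Kolyvagin--Logachev), so the central value of the triple product $L(f\otimes f\otimes f,2)$ vanishes. Gross--Kudla then gives $\sum_i w_i c_i^3=0$. Reducing modulo $2$ and discarding the terms with $i\in S_p$, which vanish since those $c_i$ are even, we get
\[
  0\equiv\sum_{\text{pairs}}2c_i^3 \pmod 4 ,
\]
so $\sum_{\text{pairs}}c_i^3\equiv0\pmod2$. Since $c^3\equiv c^2\pmod2$, this gives $\sum_{\text{pairs}}c_i^2\equiv0\pmod 2$, and hence $4\mid m_E$. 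A possible alternative is to use the degree relation $\sum_i w_ic_i=0$, which holds because $v_E$ is orthogonal to the Eisenstein vector; modulo $2$ this gives $\sum_{S_p}c_i+2\sum_{\text{pairs}}c_i\equiv0\pmod2$, but that only recovers information modulo $2$. The cubic identity is the step that carries the extra factor of $2$.

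The main obstacle is checking the normalizations. One must confirm that Mestre's formula holds with exactly this pairing and no stray factor $2$ or $w_i$-weighting ambiguity. One must also confirm that the Gross--Kudla identity is stated exactly as $\sum w_ic_i^3=0$, with those weights, whenever $L(E,1)=0$. I would verify both against \cite{KazalickiKohen2018,KazalickiKohenCorrigendum2019}, where these normalizations were fixed. Finally, Watkins' conjecture for rank $2$ follows at once, since then $2^{\operatorname{rank}E(\Q)}=4$. Rank $2$ curves have root number $+1$, either by parity (assuming it) or simply because the hypothesis on the root number is imposed in the statement.
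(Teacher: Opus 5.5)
Your mod-$4$ bookkeeping is correct, and it is the Kazalicki--Kohen argument that the paper cites rather than reproduces. Evenness on $S_p$ kills those terms in $\sum_i w_ic_i^2$ and in the cubic relation (modulo $4$ and $8$ respectively). Off $S_p$, $c_{\bar i}=c_i$ and $w_i=1$ turn the remaining terms into $2\sum_{\text{pairs}}c_i^2$ and $2\sum_{\text{pairs}}c_i^3$. Then $2\sum_{\text{pairs}}c_i^3\equiv0\pmod 8$ forces $\sum_{\text{pairs}}c_i^2$ to be even.

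What is missing is the link between $\langle v_E,v_E\rangle$ and $m_E$. \textbf{First gap: you only treat the optimal curve.} The theorem is stated for an arbitrary $E$ of conductor $p$. Isogenous curves share $v_E$ but not the modular degree, so an identity of the form $m_E=\langle v_E,v_E\rangle$ can only concern the $\Gamma_0(p)$-optimal curve $E_0$, and isogeny classes of prime conductor need not be singletons. You never pass from $E_0$ to $E$. The paper does this explicitly: after normalizing $\psi(\infty)=0$, every parametrization $\psi\colon X_0(p)\to E$ factors as $\varphi\circ\pi_0$, with $\pi_0\colon X_0(p)\to E_0$ the optimal parametrization and $\varphi$ an isogeny. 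Hence $m_{E_0}\mid m_E$.

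\textbf{Second gap: the factor in Mestre's formula.} The correction factor is not the Manin constant or an index, and it is not $1$ for every optimal curve of prime conductor. For optimal $E_0$ one has $\langle v_E,v_E\rangle=m_{E_0}\,c_p(E_0)$, where $c_p(E_0)=v_p(\Delta_{E_0})$ is the order of the component group at $p$ (the reduction is split since the root number is $+1$). For example, for $11a1$ the vector is $v=e_0-e_{1728}$, so $\langle v,v\rangle=3+2=5$, while $m=1$ and $c_{11}=5$. For $37b1$, which has root number $+1$, one gets $\langle v,v\rangle=6=2\cdot 3$.

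Your computation therefore only proves $4\mid m_{E_0}c_p(E_0)$, which does not give $4\mid m_{E_0}$ when $c_p(E_0)$ is even. You need an additional input showing that $c_p(E_0)$ is odd in the cases at hand. One such input is the Mestre--Oesterl\'e theorem that an optimal curve of prime conductor $p>37$ has minimal discriminant $\pm p$, hence $c_p=1$. Combine it with the fact that conductors $11,17,19,37$ carry no curve of positive rank and root number $+1$. This is a genuine extra ingredient, not merely a normalization to be checked against the references.
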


\begin{proof}
Let $E_0$ be the $\Gamma_0(p)$-optimal curve in the isogeny class of
$E$. The curves $E$ and $E_0$ have the same Brandt eigenvector, rank, and
root number. By Corollary~\ref{cor:parity}, its coefficients corresponding
to the classes in $S_p$ are even. Since the root number is $+1$, the
proof of \cite[Theorem~1.3]{KazalickiKohen2018} applies to $E_0$ and gives
\[
  4\mid m_{E_0}.
\]

It remains to pass from the optimal curve to the given curve $E$. Choose a modular parametrization
$\psi:X_0(p)\to E$ of degree $m_E$. After translating $\psi$ so that
$\psi(\infty)=0$, it factors through the optimal curve $E_0$; see
\cite[Remark~1.3]{CalegariEmerton2009}. Thus
\[
  \psi=\varphi\circ\pi_0
\]
for the optimal parametrization $\pi_0:X_0(p)\to E_0$ and a
$\Q$-isogeny $\varphi:E_0\to E$. Hence
\[
  \deg(\psi)=m_{E_0}\deg(\varphi),
\]
so $m_{E_0}\mid m_E$, and therefore $4\mid m_E$.
\end{proof}

\section{The case \texorpdfstring{$p=83$}{p=83}}\label{sec:example83}

The prime $83$ gives a small example in which both binary-form branches
occur and the constructions used in the proof of
Theorem~\ref{thm:spanning} can be seen explicitly. The rational
supersingular geometric classes are
\[
  S_{83}=\{0,17,28,50,67,68\},
  \qquad
  68\equiv1728\pmod{83}.
\]
The corresponding $\F_{83}$-isomorphism classes occur in quadratic-twist
pairs as in \cite[Example~3.8]{XZDQ2025}. Convenient reduced
representatives of the relevant inverse orbits are
\[
\begin{array}{ccl}
\toprule
\text{discriminant} & j & \text{form}\\
\midrule
-83   & 68 & [1,1,21]\\
      & 50 & [3,1,7]\\
\addlinespace
-1328 & 68 & [4,0,83]\\
      & 17 & [11,-6,31]\\
      & 28 & [7,4,48]\\
      & 67 & [16,-12,23]\\
      & 0  & [3,-2,111]\\
\bottomrule
\end{array}
\]
where inverse representatives are suppressed. Thus
\[
  S_{83}^{(16)}=\{17,28,67\},
  \qquad
  S_{83}^{(p)}=\{50\},
\]
while $0$ and $68$ are the exceptional coordinates supplied by $D=3$
and $D=4$.

The first stage of the spanning argument is already visible at $D=7$.
Among the nonexceptional classes of discriminant $-1328$, only
$[7,4,48]$ has odd half-representation number at $7$; indeed,
\[
  [7,4,48](\pm1,0)=7.
\]
Hence
\[
  \crow(7)\equiv e_{28}\pmod{W_{\mathrm{exc}}}.
\]

The second branch can be seen with
\[
  D=68=4\cdot17.
\]
At norm $68$ in the discriminant-$-1328$ branch, only the class
$[7,4,48]$ has odd half-representation number, while at norm $17$ in the
discriminant-$-83$ branch only $[3,1,7]$ does. For example,
\[
  [7,4,48](2,-1)=68,
  \qquad
  [3,1,7](2,-1)=17.
\]
Thus the nonexceptional support of $\crow(68)$ is
\[
  \{28,50\}.
\]
Since the coordinate $e_{28}$ has already been generated, this row gives
$e_{50}$ modulo the previously generated subspace. This illustrates the
two-stage structure of the proof of Theorem~\ref{thm:spanning}.

The additional semiprime relation recorded after the spanning theorem is
also visible with small examples. Representation-parity computations give
\[
\begin{array}{ccl}
\toprule
D & \text{factorization} &
\text{nonexceptional support on }S_{83}^{(16)}\\
\midrule
51  & 3\cdot17 & \{28,67\}\\
123 & 3\cdot41 & \{17,28\}\\
259 & 7\cdot37 & \{17,67\}\\
\bottomrule
\end{array}
\]
For instance,
\[
  [7,4,48](1,-1)=[16,-12,23](1,-1)=51,
\]
\[
  [11,-6,31](1,2)=[7,4,48](3,1)=123,
\]
and
\[
  [11,-6,31](3,-2)=[16,-12,23](1,-3)=259.
\]
No other nonexceptional inverse orbit of discriminant $-1328$ has
odd half-representation number at the corresponding value. Hence the
restrictions of these three admissible semiprime rows to $S_{83}^{(16)}$
are, respectively,
\[
  e_{28}+e_{67},\qquad
  e_{17}+e_{28},\qquad
  e_{17}+e_{67}.
\]
Thus, in this example, every pair of nonexceptional coordinates in the
discriminant $-16p$ branch is realized by a single semiprime Gross row, as
predicted by Proposition~\ref{prop:semiprime-support}.

\section*{Acknowledgments}

This research was supported by the European Union -- NextGenerationEU through the National Recovery and Resilience Plan 2021--2026 institutional grants of the University of Zagreb Faculty of Science (IK IA 1.1.3 Impact4Math, PMF-CROFUND). M.K. was also supported by the Croatian Science Foundation under the project IP-2022-10-5008 (TEBAG) and by the project ``Implementation of cutting-edge research and its application as part of the Scientific Center of Excellence for Quantum and Complex Systems, and Representations of Lie Algebras'', Grant No.~PK.1.1.10.0004, co-financed by the European Union through the European Regional Development Fund -- Competitiveness and Cohesion Programme 2021--2027.

During the development of this work, the authors used Aleph, a mathematical research assistant developed by Cantab Pi, and ChatGPT (OpenAI) for literature searches, exploratory work on proof strategies, and assistance with drafting and editorial review. The authors independently checked all mathematical statements, proofs, computations, and citations and take full responsibility for the contents of the paper.

\end{document}